\theoremstyle{plain}
\newtheorem{theorem}{Theorem}[section]
\newtheorem{corollary}[theorem]{Corollary}
\newtheorem{proposition}[theorem]{Proposition}
\newtheorem{lemma}[theorem]{Lemma}
\theoremstyle{definition}
\newtheorem{remark}[theorem]{Remark}
\newtheorem{example}[theorem]{Example}
\newtheorem{definition}[theorem]{Definition}
\newtheorem{notation}[theorem]{Notation}
\newcommand{\bC}{{\mathbb{C}}}
\newcommand{\bD}{{\mathbb{D}}}
\newcommand{\bN}{{\mathbb{N}}}
\newcommand{\bT}{{\mathbb{T}}}
\newcommand{\bZ}{{\mathbb{Z}}}
\renewcommand{\phi}{\varphi}
\newcommand{\upchi}{{\raise.35ex\hbox{\ensuremath{\chi}}}}
\newcommand{\conv}{\operatorname{conv}}
\newcommand{\spn}{\operatorname{span}}
\newcommand{\supp}{\operatorname{supp}}
\newcommand{\ol}{\overline}
\newcommand{\susbeteq}{\subseteq} %common typo
\newcommand{\cpr}{\rtimes}
\begin{document}
%%%%%%%%%%%%%%%%%%%%%%%%%%%%%%%%%%%%%%
\title{Intermediate C$^*$-algebras of Cartan embeddings}
\author[J.H. Brown]{Jonathan H. Brown}
\address[J.H. Brown]{
Department of Mathematics\\
University of Dayton\\
300 College Park Dayton\\
OH 45469-2316 U.S.A.} \email{jonathan.henry.brown@gmail.com}

\author[R. Exel]{Ruy Exel}
\address[R. Exel]{
Departamento de Matem\'{a}tica  \\
Universidade Federal de Santa Catarina \\
Florian\'{o}polis \\
Santa Catarina\\
Brazil}
\email{r@exel.com.br}

\author[A.H. Fuller]{Adam H. Fuller}
\address[A.H. Fuller]{
Department of Mathematics\\
Ohio University\\
Athens\\
OH 45701 U.S.A.}
\email{fullera@ohio.edu}

\author[D.R. Pitts]{David R. Pitts}\thanks{This work was supported by  grants from the Simons Foundation (DRP \#316952, SAR \#36563); CNPq (RE); and by the American Institute of Mathematics SQuaREs Program.}  \address[D.R. Pitts]{
  Department of Mathematics\\
  University of Nebraska-Lincoln\\
  Lincoln\\
  NE 68588-0130 U.S.A.}  \email{dpitts2@math.unl.edu}

\author[S. A. Reznikoff]{Sarah A. Reznikoff}
\address[S. A. Reznikoff]{
Department of Mathematics\\
Kansas State University\\
138 Cardwell Hall\\
Manhattan, KS, U.S.A. }
\email{sarahrez@math.ksu.edu}

\subjclass[2000]{Primary 46L05; Secondary 22A22, 46L55}

\begin{abstract}
Let $A$ be a C$^*$-algebra and let $D$ be a Cartan subalgebra of $A$.
We study the following question: if $B$ is a C$^*$-algebra such that $D \subseteq B \subseteq A$, is $D$ a Cartan subalgebra of $B$?
We give a positive answer in two cases: the case when there is a faithful conditional expectation from $A$ onto $B$, and the case when $A$ is nuclear and $D$ is a C$^*$-diagonal of $A$.
In both cases there is a one-to-one correspondence between the intermediate C$^*$-algebras $B$, and a class of open subgroupoids of the groupoid $G$, where $\Sigma \rightarrow G$ is the twist associated with the embedding $D \subseteq A$.
\end{abstract}

\dedicatory{In memory of Richard M.~Timoney and Donal O'Donovan}

\maketitle

\section{Introduction}
An interesting, and common, type of question in the study of operator algebras is the following.
Suppose $D$ is an algebra embedded in an algebra $A$ in such a way that that the inclusion has some nice properties.
If $B$ is an algebra intermediate to $D$ and $A$: i.e.\ $D \susbeteq B \subseteq A$, does the embedding of $D$ in $B$ have the same nice properties?
Is there a way to classify all the intermediate algebras $B$?

Perhaps the most famous of these results is the Galois correspondence for the crossed products of von Neumann algebra factors by discrete groups.
This says that if $N$ is a factor von Neumann algebra, and $G$ is a discrete group acting freely on $N$, then the map $H \mapsto N \cpr H$ gives a one-to-one correspondence between subgroups of $H\subseteq G$ and intermediate von Neumann algebras $N \subseteq M \susbeteq N \cpr G$.
This result is due to Izumi, Longo and Popa \cite{ILP1998}, building on the work of Choda \cite{Cho1978}.
An alternate, elegant proof was given by Cameron and Smith \cite{CamSmi2015}.

Similar Galois correspondence results have been proved for reduced crossed-products of C$^*$-algebras.
If $A$ is a C$^*$-algebra and $G$ is a discrete group acting on $A$ by automorphisms, then Choda \cite{Cho7980} gives a one-to-one correspondence between subgroups of $G$ and a certain class of intermediate C$^*$-algebras $A \subseteq B \subseteq A \cpr_r G$.
An intermediate algebra $B$ is in the desired class if, among other things,  there is a faithful conditional expectation $A \cpr_r G \rightarrow B$.
Cameron and Smith \cite{CamSmi2019}, generalising an earlier result of Izumi \cite{Izumi1998} for finite groups, showed if $A$ is a simple C$^*$-algebra and $G$ is a discrete group acting on $A$ by outer automorphisms, then the map $H \mapsto A \cpr H$ is a one-to-one correspondence between subgroups of $H \susbeteq G$ and C$^*$-algebras $B$ satisfying $A \subseteq B \subseteq A \cpr G$.
In this case there is always a faithful conditional expectation from $A \cpr_r G$ onto an intermediate C$^*$-algebra.

Beyond the rigid structure of crossed products, there are striking examples of these intermediate algebra type results.
Suppose $D$ is a Cartan subalgebra of a von Neumann algebra $M$. 
Aoi proved that if $D \subseteq N \subseteq M$, then $D$ is also a Cartan subalgebra of $N$ \cite{Aoi2003}.
Cameron, Pitts and Zarikian have given an alternative proof of this theorem \cite{CPZ2013}.
Feldman and Moore \cite{FelMooI, FelMooII} gave a one-to-one correspondence between Cartan embeddings and measured equivalence relations.
Thus, analogous to the Galois correspondence results mentioned above, Aoi's result gives a one-to-one correspondence between sub-measured equivalence relations and von Neumann algebras $D \subseteq N \subseteq M$.
Alternatively, Donsig, Fuller and Pitts \cite{DFP2017} give a measure-free description of Cartan embeddings based on extensions of inverse semigroups.
There, Aoi's theorem is used to show a one-to-one correspondence between a class of sub-inverse monoids of an inverse monoid $S$ and intermediate von Neumann algebras $D \subseteq N \subseteq M$.
These results are generalized beyond Cartan embeddings in \cite{DFP2018}.

In this note we address an analogous problem in the C$^*$-algebra setting.
Suppose $D$ is a Cartan subalgebra of a C$^*$-algebra $A$, in the sense of Renault \cite{Ren2008}.
Renault showed that there is a twist $\Sigma \overset{q}\rightarrow G$ such that $A$ can be identified with the reduced C$^*$-algebra of the twist $C_r^*(\Sigma;G)$ and $D$ is identified with $C(G^{(0)})$.
In Theorem~\ref{thm: Cartan and open sub}, we exhibit a one-to-one correspondence between open subgroupoids $H\subseteq G$ and $C^*$-algebras B such that $D\subseteq  B \subseteq A$ and $D$ is Cartan in $B$.
We further show in Theorem~\ref{thm: clopen} that this map gives a one-to-one correspondence between subgroupoids $H$ which are both closed and open in $G$ and intermediate C$^*$-algebras $D \subseteq B \subseteq A$ with a faithful conditional expectation $F \colon A \rightarrow B$.

However, a direct analogue of Aoi's theorem does not hold in this context.
There are Cartan embeddings $D \subseteq A$ with intermediate subalgebras $D \subseteq B \subseteq A$ where $D$ is not Cartan in $B$.
A relatively simple example is given in Example~\ref{ex: top free act}.
Stronger results can be found with some additional, very natural, hypotheses.

Two decades before Renault's work on Cartan subalgebras of C$^*$-algebras \cite{Ren2008}, Kumjian introduced the stronger property of C$^*$-diagonal \cite{Kum1986}.
In Theorem~\ref{thm: main} we give a Galois-correspondence type result in this context.
Suppose $D$ is a C$^*$-diagonal of a nuclear C$^*$-algebra $A$.
Let $\Sigma \overset{q}\rightarrow G$ be the twist corresponding to the embedding $D \subseteq A$.
The map $H \mapsto C_r^*(\Sigma_H;H)$ gives a one-to-one correspondence between open subgroupoids $H \subseteq G$ with $H^{(0)} = G^{(0)}$ and all C$^*$-algebras $B$ such that $D \subseteq B \subseteq A$.
In particular, if $D \subseteq B \subseteq A$ then $D$ is a C$^*$-diagonal in $B$.
In contrast with the work of \cite{Cho7980, Izumi1998, CamSmi2019} there is not necessarily a conditional expectation onto the intermediate subalgebra.

Takeishi \cite{Tak2014} showed nuclearity of $A$ is equivalent to the amenability of $G$.
Let $G^{(0)} \subseteq H \subseteq G$ be an open subgroupoid, and let $\Sigma_H \rightarrow H$ be the corresponding subtwist of $\Sigma \rightarrow G$.
In Theorem~\ref{thm: algebra support}, amenability of $G$ is used to show that, viewing $C_r^*(\Sigma_H;H)$ as a subalgebra of  $C_r^*(\Sigma;G)$, an element $a \in C_r^*(\Sigma;G)$ is in $C_r^*(\Sigma_H;H)$ if and only if $a$ (when viewed as a function on $G$) vanishes off $H$.
This is a key ingredient in the proof of Theorem~\ref{thm: main}.
When the amenability condition is removed, we show in Theorem~\ref{thm: maina} that $B$ is contained in the $C^*$-algebra of functions supported on $H$ and $C^*_r(\Sigma_H; H)\subseteq B$.

In the final section we focus on the case of C$^*$-algebraic crossed products, where we strengthen our results beyond the nuclear case.
Let $\Gamma$ be a discrete group acting on a compact Hausdorff space $X$ by homeomorphisms.
To apply Theorem~\ref{thm: main} to a crossed product we would require that the algebra $C(X)$ is a C$^*$-diagonal in the reduced crossed-product $C(X) \cpr \Gamma$, and that $C(X) \cpr \Gamma$ is nuclear.
This happens if and only if $\Gamma$ is an amenable group acting freely on $X$.
We can, however, prove a version of Theorem~\ref{thm: main} which does not require that $\Gamma$ is amenable, and instead assume $\Gamma$ has the approximation property of Cowling and Haagerup \cite{CowHaa1989}.
Let $\Gamma$ be a group which satisfies the approximation property and acts freely on a compact Hausdorff space $X$.
Let $\Gamma \times X$ be the corresponding transformation groupoid.
We prove in Corollary~\ref{cor: spectral} that the map $H \mapsto C_r^*(H)$ gives a one-to-one correspondence between open subgroupoids $H \subseteq \Gamma \times X$ with $\{e\} \times X \subseteq H$ and all C$^*$-algebras $B$ satisfying $C(X) \subseteq B \subseteq C(X) \cpr_r \Gamma$.
Thus, while there is not a Galois correspondence from the subgroups of $\Gamma$, there is a Galois-type correspondence from the open subgroupoids of the transformation groupoid $\Gamma \times X$.
This result is further evidence of the value of the groupoid approach to C$^*$-algebras, even in the relatively straightforward setting of crossed products by discrete group actions.

This Galois-type correspondence is a corollary of a spectral theorem for bimodules in $C(X) \cpr_r \Gamma$.
In Theorem~\ref{thm: spectral} we show that there is a one-to-one correspondence between the open subsets of $\Gamma \times X$ and the norm-closed $C(X)$-bimodules in $C(X) \cpr \Gamma$.
A similar spectral theorem is proved for actions of groups satisfying the approximation property on simple C$^*$-algebras in \cite{CamSmi2019}.
This result is also a direct analogue of the Spectral Theorem for Bimodules for Cartan embeddings in von Neumann algebras; see \cite{Ful1997, CPZ2013, DFP2017}.

In the von Neumann algebra setting, $L^\infty(X,\mu)$ is Cartan in the crossed product $L^{\infty}(X,\mu) \cpr \Gamma$ if and only if the action of $\Gamma$ on the measure space $(X,\mu)$ is (measurably) free \cite[Corollary~V.7.7]{TakI}.
It has been argued, e.g.\ by Tomiyama \cite{Tom1992}, that topological freeness is the correct analogue of free actions on measure spaces.
There is good reason for this viewpoint.
However, Corollary~\ref{cor: spectral} shows that there are settings when free actions, not topologically free actions, are needed in order to get desirable analogues of von Neumann algebra results.

%%%%%%%%%%%%%%%%%%%%%%%
\section{Preliminaries}
We recall the key details of Cartan embeddings in C$^*$-algebras and their relation to twists.
Recall that if $A$ is a C$^*$-algebra and $D$ is a subalgebra, then $n\in A$ \emph{normalizes} $D$ if $nDn^* \cup  n^*Dn \subseteq D$. 
We denote the normalizers of $D$ in $A$ by $N(A,D)$.
The subalgebra $D$ is \emph{regular} in $A$ if $N(A,D)$ spans a dense subset of $A$.

\begin{definition}\label{def: Cartan}
Let $A$ be a C$^*$-algebra.
A maximal abelian C$^*$-subalgebra $D \subseteq A$ is \emph{Cartan in $A$}, or a \emph{Cartan subalgebra of $A$}, if
\begin{enumerate}
\item $D$ contains an approximate identity for $A$;
\item there is a faithful conditional expectation $E \colon A \rightarrow D$;
\item $D$ is regular in $A$.
\end{enumerate}
The subalgebra $D$ is a \emph{C$^*$-diagonal of $A$} if $D$ is Cartan in $A$ and
\begin{enumerate}[resume]
\item every pure state on $D$ extends uniquely to a pure state on $A$, i.e $D$ has the \emph{unique extension property} in $A$.
\end{enumerate}
\end{definition}

It is worth noting that the conditional expectation $E$ above is unique \cite[Proposition~4.2]{Ren2008}.
Archbold, Bunce and Gregson \cite[Corollary~2.7]{ABG1982} classify precisely when an abelian subalgebra $D$ of a C$^*$-algebra $A$ satisfies the unique extension property of Definition~\ref{def: Cartan}(iv) in terms of properties of the conditional expectation.
This allows us to give the following alternative differentiation between C$^*$-diagonals and Cartan embeddings when $A$ is unital.

\begin{theorem}[c.f. {\cite[Corollary~2.7]{ABG1982}}]\label{thm: ABG}
Let $D$ be a Cartan subalgebra of a unital C$^*$-algebra $A$, with faithful conditional expectation $E \colon A \rightarrow D$.
Then $D$ is a C$^*$-diagonal of $A$ if and only if
$$ \{E(a)\} = \ol{\conv}\{ u au^* \colon u\in D, u \text{ unitary}\} \cap D, $$
for all $a\in A$.
\end{theorem}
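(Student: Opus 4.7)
The plan is to reduce the equivalence to \cite[Corollary~2.7]{ABG1982} combined with a short direct computation. By Definition~\ref{def: Cartan}, $D$ is a C$^*$-diagonal of $A$ exactly when $D$ is Cartan in $A$ and has the unique extension property (UEP) for pure states. Since we are already assuming $D$ is Cartan, the task is to show that, in this setting, UEP is equivalent to the averaging identity $\{E(a)\} = \ol{\conv}\{uau^* : u \in D,\ u\text{ unitary}\} \cap D$ for every $a \in A$.

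First I would observe that the inclusion $\supseteq$ in the averaging identity is automatic: because $E$ is a $D$-bimodule map and $D$ is abelian, each unitary $u \in D$ satisfies $E(uau^*) = uE(a)u^* = E(a)$, so by linearity and norm-continuity the conditional expectation takes the constant value $E(a)$ at every point of the closed convex hull. Any $d \in D$ lying in this hull therefore equals $E(d) = E(a)$. The theorem thus collapses to the equivalence between UEP and the single averaging condition $E(a) \in \ol{\conv}\{uau^* : u \in D,\ u\text{ unitary}\}$.

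The direction ``averaging $\Rightarrow$ UEP'' is short: if $\omega$ is a pure state on $D$ (hence a character) and $\phi$ is any state of $A$ extending $\omega$, then each unitary $u \in D$ satisfies $\phi(u^*u) = \omega(1) = 1 = |\omega(u)|^2 = |\phi(u)|^2$, placing $u$ in the multiplicative domain of $\phi$. Consequently $\phi(uau^*) = |\omega(u)|^2 \phi(a) = \phi(a)$, so $\phi$ is constant with value $\phi(a)$ on $\ol{\conv}\{uau^*\}$; evaluating at the element $E(a)$ of this hull gives $\phi(a) = \omega(E(a))$, forcing $\phi = \omega \circ E$ and establishing uniqueness of the extension.

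The reverse implication ``UEP $\Rightarrow$ averaging'' is the substantive part and the main obstacle; I would import it directly from \cite[Corollary~2.7]{ABG1982}, which provides exactly this Dixmier-type characterization: for an abelian C$^*$-subalgebra of a unital C$^*$-algebra, UEP is equivalent to the existence (hence, by the observation above, uniqueness) of an element of $D$ inside $\ol{\conv}\{uau^* : u \in D,\ u\text{ unitary}\}$ for every $a \in A$. In our Cartan setting that element must coincide with $E(a)$, which furnishes the missing inclusion and completes the proof.
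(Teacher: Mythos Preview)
Your proposal is correct. Note, however, that the paper does not supply its own proof of this theorem: it is stated with the attribution ``c.f.~\cite[Corollary~2.7]{ABG1982}'' and left unproved, the surrounding text simply remarking that Archbold, Bunce and Gregson characterize the unique extension property in terms of the conditional expectation. Your write-up is therefore more detailed than what appears in the paper---you verify the easy inclusion and the ``averaging $\Rightarrow$ UEP'' direction explicitly via the multiplicative-domain argument, and defer only the substantive reverse implication to \cite{ABG1982}, whereas the paper defers the entire statement.
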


%%%%%%%%%%%%%%%%%%%%%%%%%%%
Suppose $D$ is a Cartan subalgebra of $A$, with associated conditional expectation $E$.
We will briefly recap how to construct the groupoid twist from the embedding $D\subseteq A$.
As $D$ is abelian, $D \simeq C_0(X)$ for a locally compact Hausdorff space $X$. 
If $n \in A$ normalizes $D$ then $n^*n \in D$, and thus $n^*n$ can be viewed as a continuous positive function on $X$.
Let
$$ s(n) = \{ x\in X \colon n^*n(x) > 0 \}. $$
By \cite[Proposition~6]{Kum1986} the normalizer $n$ defines a partial homeomorphism $\beta_n$ on $X$, with domain $s(n)$ and range $s(n^*)$ satisfying
$$ (n^*dn) (x) = (dnn^*) (\beta_n(x)) $$
for all $d\in D$ and $x\in s(n)$.

For every normalizer $n\in N(A,D)$ and $x\in s(n)$, define a linear functional $[n,x]$ on $A$ by
$$ [n,x](a) = \frac{E(n^*a) (x)}{(n^*n)(x)^{1/2}}. $$
Denote by $\Sigma$ the collection of all such linear functionals $[n,x]$.
The set $\Sigma$ becomes a groupoid with partial multiplication 
\begin{equation*}
	[n,x][m,y] = [nm,y]
\end{equation*}
when $x = \beta_m(y)$; and inverse
\begin{equation*}
	[n,x]^{-1} = [n^*, \beta_n(x)].
\end{equation*}
Further, as a collection of linear functionals, $\Sigma$  inherits the relative weak$^*$-topology.
Under this topology $\Sigma$ is a topological groupoid.
There is an action of the unit circle $\bT$ on $\Sigma$ by
$$ \lambda\cdot[n,x] = [\ol{\lambda}n,x]. $$
Letting $G = \Sigma/ \bT$, we have the twist
$$ \bT \times X \hookrightarrow \Sigma \overset{q}\twoheadrightarrow G. $$
The groupoid $G$ is given the quotient topology.
Equivalently, the topology on $G$ is generated by the basic sets
$$ Z(n) = \{ q([n,x])\colon x\in s(n) \}, $$
for $n \in N(A,D)$.
Under this topology $G$ is a topologically principal, \'{e}tale groupoid \cite{Ren2008}.
If $D$ is a C$^*$-diagonal of $A$ then $G$ is a principal, \'{e}tale groupoid \cite{Kum1986}.
This twist will usually be abbreviated to $\Sigma \rightarrow G$, or $\Sigma \overset{q}\rightarrow G$ when we wish to emphasize the quotient map.

To construct the associated line bundle $L$ over $G$, we put the following equivalence relation on $\bC \times \Sigma$.
Define $(z_1,\gamma_1) \sim (z_2, \gamma_2)$ if there is a $\lambda \in \bT$ such that $(z_1,\gamma_1) = (\ol{\lambda} z_2, \lambda \cdot \gamma_2).$
Thus $[z\lambda,\gamma] = [z,\lambda\cdot\gamma]$.
Let $L = (\bC \times \Sigma)/ \sim.$
The continuous surjection $P \colon L \rightarrow G$ defined by
$P([z,\gamma]) = q(\gamma)$ makes $L$ a Fell line bundle over $G$.
For $[z,\gamma] \in L$ we define $|[z,\gamma]| = |z|$.

The collection of continuous compactly supported cross-sections $C_c(\Sigma;G)$ of $L$ becomes a $*$-algebra under the following operations.
For $f,g \in C_c(\Sigma;G)$ the convolution product on $C_c(\Sigma;G)$ is defined by
$$ f * g(\gamma) = \sum_{\gamma = \alpha \beta} f(\alpha)g(\beta), \quad \gamma \in G. $$
The adjoint is given by
$$ f^*(\gamma) = \overline{f(\gamma^{-1})}, \quad \gamma \in G.$$
The reduced C$^*$-algebra $C_r^*(\Sigma;G)$ of the twist $\Sigma \rightarrow G$ is the unique closure of the $*$-algebra $C_c(\Sigma;G)$ so that the restriction map $E \colon C_c(\Sigma;G) \rightarrow C_c(G^{(0)})$ extends to a faithful conditional expectation $E \colon C_r^*(\Sigma;G) \rightarrow C_0(G^{(0)})$.
A detailed description of the construction of the reduced C$^*$-algebra $C_r^*(\Sigma;G)$ can be found in \cite[Section~2]{BFPR}.

We recall main theorems of \cite{Kum1986} and \cite{Ren2008}.

\begin{theorem}[c.f. \cite{Kum1986} and \cite{Ren2008}] \label{thm: Kumjian Renault}
Let $D$ be a Cartan subalgebra of a C$^*$-algebra $A$.
Then $A$ is isomorphic to $C_r^*(\Sigma;G)$ via an isomorphism which carries $D$ to $C_0(G^{(0)})$.

Conversely if $\Sigma \rightarrow G$ is a twist with $G$ topologically principal and \'{e}tale then $C_0(G^{(0)})$ is a Cartan subalgebra of $C_r^*(\Sigma;G)$.
The algebra $C_0(G^{(0)})$ is a C$^*$-diagonal of $C_r^*(\Sigma; G)$ if and only if $G$ is a principal groupoid.
\end{theorem}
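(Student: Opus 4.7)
The plan is to prove the three assertions in turn: (a) every Cartan pair $D \subseteq A$ is modeled by a twist, (b) every topologically principal \'{e}tale twist yields a Cartan pair, and (c) the C$^*$-diagonal refinement. The construction of $\Sigma$ from $N(A,D)$ is already sketched in the excerpt, so the substantive work lies in verifying its topological and groupoid structure and in building the isomorphism.

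For (a), I would first identify $G^{(0)}$ with the spectrum $X = \widehat{D}$ via $[d,x] \mapsto x$ (for $d \in D$ with $d(x) > 0$) and check that the sets $Z(n)$ for $n \in N(A,D)$ form a basis of open bisections; since $N(A,D)$ is closed under products and adjoints and carries the partial homeomorphisms $\beta_n$, this topology makes $G$ \'{e}tale. Topological principality---that the units with trivial isotropy are dense---is the first genuinely hard step and follows from maximality of $D$ in $A$ via the classical Kumjian argument: a normalizer whose induced bisection $Z(n)$ met the isotropy over an open subset of $X$ could, using the commutation $n d = (d \circ \beta_n) n$ for $d \in D$, be shown to lie locally in $D$, contradicting that $Z(n)$ is disjoint from $G^{(0)}$ there. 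To build the isomorphism, define $\Psi \colon C_c(\Sigma;G) \to A$ by sending the canonical section determined by a normalizer $n$ (whose value at $[n,x]$ is the class $[(n^*n)^{1/2}(x), [n,x]] \in L$) back to $n$ itself; the $\bT$-equivariance of these sections makes $\Psi$ well-defined on partitions of unity subordinate to a cover by bisections $Z(n)$. Multiplicativity and the $*$-property follow from $[n,x][m,y] = [nm, y]$ and $[n,x]^{-1} = [n^*, \beta_n(x)]$. The key structural identity is $E_A \circ \Psi = \Psi \circ E$, where $E$ is the restriction of sections to $G^{(0)}$ and $E_A$ is the Cartan expectation; this holds on normalizers because $E_A(n)(x)$ equals the value of the canonical section at $x$. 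Since both $E_A$ and $E$ are faithful, the $C^*$-norm pulled back from $A$ agrees with the reduced $C^*$-norm on the algebraic image, so $\Psi$ extends to an injective $*$-homomorphism $C_r^*(\Sigma;G) \to A$; density of $\spn N(A,D)$ in $A$ gives surjectivity, and by construction the restriction carries $C_0(G^{(0)})$ onto $D$.

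For (b), the four axioms of Definition~\ref{def: Cartan} for $C_0(G^{(0)}) \subseteq C_r^*(\Sigma;G)$ are verified directly. The faithful conditional expectation and the approximate identity in $C_c(G^{(0)})$ are built into the construction, and sections supported on open bisections normalize $C_0(G^{(0)})$ and densely span $C_r^*(\Sigma;G)$, giving regularity. Maximal abelianness is the delicate axiom: if $a \in C_r^*(\Sigma;G)$ commutes with every element of $C_0(G^{(0)})$, then, viewed as a section on $G$, its support lies in the isotropy bundle of $G$; topological principality together with continuity of $a$ forces this support into $G^{(0)}$, so $a \in C_0(G^{(0)})$. For (c), apply Theorem~\ref{thm: ABG}: when $G$ is principal, averaging $u a u^*$ over unitaries $u \in D$ annihilates all off-diagonal components of $a$ and leaves only $E(a)$; when $G$ has nontrivial isotropy at some $x \in G^{(0)}$, distinct characters of the isotropy group at $x$ produce distinct pure state extensions of $\operatorname{ev}_x$, and the unique extension property fails. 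The main obstacle throughout is translating between the algebraic maximality of $D$ in $A$ and the topological principality of $G$; once that bridge is in place, the remaining work is careful bookkeeping with partitions of unity on the bisection cover and with the canonical sections of the line bundle $L$.
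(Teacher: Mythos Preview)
The paper does not supply its own proof of this theorem: it is stated in the Preliminaries section as a result quoted from Kumjian \cite{Kum1986} and Renault \cite{Ren2008}, with no accompanying argument. There is therefore nothing in the paper to compare your proposal against.

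That said, your outline is a faithful sketch of the approach taken in the cited references. The three-part decomposition (Cartan pair $\Rightarrow$ twist, twist $\Rightarrow$ Cartan pair, principal $\Leftrightarrow$ C$^*$-diagonal) is exactly how the literature organises the proof, and the individual steps you highlight --- identifying $G^{(0)}$ with $\widehat{D}$, deriving topological principality from maximal abelianness of $D$, defining the isomorphism on normalizers and extending via the faithful expectations, and handling the diagonal characterisation through the unique extension property --- are the correct ingredients. One caution: your description of the map $\Psi$ as sending ``the canonical section determined by a normalizer $n$'' back to $n$ glosses over the main technical content of Renault's argument, which is to show that such sections span $C_c(\Sigma;G)$ and that the resulting $*$-homomorphism is isometric for the reduced norm; in the actual proofs this requires a careful analysis of the norm via the induced representations at each unit, not merely the observation that both expectations are faithful. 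Similarly, your argument for (c) via Theorem~\ref{thm: ABG} is a legitimate route for one direction, but Kumjian's original proof of the equivalence is more direct and does not pass through the convex-hull characterisation. These are refinements rather than gaps; the proposal is structurally correct.
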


\begin{remark}
If $D$ is a Cartan subalgebra of $A$ with associated twist $\Sigma \rightarrow G$, Theorem~\ref{thm: Kumjian Renault} allows us to identify $A$ with $C_r^*(\Sigma;G)$ and $D$ with $C_0(G^{(0)}$.
In the sequel we will do this without comment.
\end{remark}

%%%%%%%%%%%%%%%%%%%%
Our main result, Theorem~\ref{thm: main},  applies when the groupoid is amenable.
Amenability of groupoids was introduced by Renault \cite[Chapter~II.3]{RenaultBook}.
For our purposes we will use a characterization of amenability in terms of positive-type functions.
Let $G$ be a groupoid.
A function $h \colon G \rightarrow \bC$ is of \emph{positive-type} if for all $x\in G^{(0)}$ and finite subsets $F \subseteq G_x$ the matrix $[h(\eta^{-1}\gamma)]_{\eta,\gamma \in F}$ is a positive matrix.

\begin{theorem}[{c.f.~\cite[Theorem~5.6.18]{BrownOzawa}}]\label{thm: amenable groupoid}
  Let $G$ be an \'etale locally compact groupoid.  Then $G$ is amenable
  if and only if there is a net of $\sup$-bounded, positive-type
  functions in $C_c(G)$ which converges uniformly to $1$ on compact
  subsets of $G$.
\end{theorem}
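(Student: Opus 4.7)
The plan is to prove the stated criterion is equivalent to Renault's original characterization of groupoid amenability in terms of a \emph{topological approximate invariant mean}. Since $G$ is étale, this amounts to producing a net $(\xi_i)$ of nonnegative functions in $C_c(G)$ satisfying
\[
\sum_{\eta\in G_x}\xi_i(\eta)^2\to 1 \qquad\text{and}\qquad \sum_{\eta\in G_{s(\gamma)}}\big|\xi_i(\gamma\eta)-\xi_i(\eta)\big|^2\to 0,
\]
uniformly on compact subsets of $G^{(0)}$ and $G$ respectively. I would prove the two implications separately, using this intermediate characterization as the bridge.

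For the forward direction, starting from such a net $(\xi_i)$ I would set
\[
h_i(\gamma):=\sum_{\eta\in G_{s(\gamma)}}\overline{\xi_i(\gamma\eta)}\,\xi_i(\eta),
\]
which is the convolution $\xi_i^*\ast\xi_i$ in $C_c(G)$. Cauchy-Schwarz gives $\|h_i\|_\infty\le 1$, and $h_i$ is of positive type by the identity
\[
\sum_{\alpha,\beta\in F}\overline{c_\alpha}\,c_\beta\, h_i(\alpha^{-1}\beta)=\sum_{\eta\in G_x}\bigg|\sum_{\beta\in F}c_\beta\,\xi_i(\beta^{-1}\eta)\bigg|^2\ge 0
\]
for finite $F\subseteq G_x$, obtained after a change of variables inside the inner sum. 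The elementary estimate $\big|1-h_i(\gamma)\big|\le\tfrac12\sum_\eta|\xi_i(\gamma\eta)-\xi_i(\eta)|^2+\big|1-\sum_\eta\xi_i(\eta)^2\big|$ together with the two approximate-invariance conditions then gives $h_i\to 1$ uniformly on compact subsets of $G$.

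For the reverse direction, I would pass through $C^*_r(G)$. A sup-bounded positive-type $h\in C_c(G)$ induces a completely positive Fourier multiplier $M_h\colon C^*_r(G)\to C^*_r(G)$ implemented on the dense subalgebra $C_c(G)$ by pointwise multiplication $f\mapsto h\cdot f$; complete positivity of $M_h$ is precisely the positive-type condition on $h$. Sup-boundedness controls $\|M_h\|_{\mathrm{cb}}$, and $h_i\to 1$ on compacts forces $M_{h_i}(f)\to f$ for every $f\in C_c(G)$. Because each $h_i$ has compact support, a Kolmogorov/Naimark-type dilation of $h_i$ applied fibrewise factors $M_{h_i}$ through a finite-dimensional matrix algebra, yielding the completely positive approximation property for $C^*_r(G)$. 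By the Anantharaman-Delaroche--Renault theorem, this is equivalent to the existence of a topological approximate invariant mean for $G$, hence to amenability.

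The main obstacle is the reverse direction: extracting continuous, compactly supported "square roots" $\xi_i$ from the positive-type functions $h_i$. Pointwise in $x\in G^{(0)}$ this is a Cholesky-type factorization of the positive matrix $[h_i(\eta^{-1}\gamma)]$, but arranging continuity in $x$ as the support shifts and patching local factorizations using partitions of unity subordinate to an étale bisection cover requires careful bookkeeping. This technical step is executed in detail in \cite[Section~5.6]{BrownOzawa}, from which the quoted statement is drawn.
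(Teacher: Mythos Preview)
The paper does not give a proof of this theorem at all: it is stated with a ``c.f.'' citation to \cite[Theorem~5.6.18]{BrownOzawa} and used as a black box (specifically, in the proof of Theorem~\ref{thm: algebra support}). So there is no proof in the paper to compare your proposal against.

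That said, a brief comment on the proposal itself. Your forward direction is the standard and correct argument: given an approximate invariant mean $(\xi_i)$, the functions $h_i=\xi_i^**\xi_i$ are positive-type, sup-bounded by Cauchy--Schwarz, and the estimate you wrote shows $h_i\to 1$ uniformly on compacts.

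Your reverse direction, however, is not quite right as stated. The claim that compact support of $h_i$ lets you factor the multiplier $M_{h_i}$ through a \emph{finite-dimensional} matrix algebra is false in general: for an \'etale groupoid with non-discrete unit space, compact support does not mean finite support, and the fibrewise Kolmogorov dilation produces a continuous field of Hilbert spaces over $G^{(0)}$, not a single finite-dimensional space. Moreover, invoking ``CPAP $\Rightarrow$ amenability'' via Anantharaman-Delaroche--Renault is close to circular here, since that implication is itself proved by constructing an approximate invariant mean from positive-type functions. The honest route is exactly the one you flag at the end as ``the main obstacle'': perform the GNS/Kolmogorov factorization of each $h_i$ to obtain sections $\xi_i$ of a Hilbert bundle, then use \'etaleness and partitions of unity to approximate these by elements of $C_c(G)$ satisfying the approximate invariant mean conditions. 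That is what \cite[Section~5.6]{BrownOzawa} does, and it is the substance of the reverse implication; the CPAP detour should be dropped.
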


We note the following theorem of Takeishi \cite{Tak2014} which classifies when $C_r^*(\Sigma;G)$ is nuclear.
The case when the twist is trivial can be found in \cite[Theorem~5.6.18]{BrownOzawa}, and for more general groupoids in \cite[Corollary~6.2.14]{AmaRenbook}.

\begin{theorem}[{c.f.~\cite[Theorem~5.4]{Tak2014}}]\label{thm: nuclear and amenable}
Let $\Sigma \rightarrow G$ be a twist, with $G$ an \'etale locally compact groupoid. 
Then $C_r^*(\Sigma;G)$ is nuclear if and only if $G$ is amenable.
\end{theorem}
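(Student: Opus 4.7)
The plan is to prove the two directions separately, with the forward direction (amenability implies nuclearity) being the more tractable one and the reverse direction requiring more care. Throughout, write $L \to G$ for the Fell line bundle associated with $\Sigma \to G$, so that $C_c(\Sigma;G)$ is really the section algebra of $L$, and note that each fiber of $L$ is $\bC$, hence nuclear.

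\smallskip

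\noindent\textbf{Amenable implies nuclear.} First I would use Theorem~\ref{thm: amenable groupoid} to pick a net $(h_i) \subseteq C_c(G)$ of sup-bounded, positive-type functions converging uniformly to $1$ on compact subsets of $G$. Since $h_i$ takes scalar values, multiplication by $h_i$ on sections gives a well-defined map $M_{h_i} \colon C_c(\Sigma;G) \to C_c(\Sigma;G)$ via $(M_{h_i}f)(\gamma) = h_i(\gamma)f(\gamma)$. The three things to check are: (a) $M_{h_i}$ extends to a completely positive contraction on $C_r^*(\Sigma;G)$; (b) $M_{h_i} \to \id$ in the point-norm topology on $C_r^*(\Sigma;G)$; and (c) each $M_{h_i}$ factors approximately through a finite-dimensional C$^*$-algebra. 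For (a), positive-type of $h_i$ means that $M_{h_i}$ is completely positive via a GNS-style argument relative to the conditional expectation $E$: if $h_i = g^**g$ for $g \in C_c(G)$ (which can be arranged by truncation and the standard decomposition of positive-type functions), then $M_{h_i}$ is a sum of Schur-like completely positive maps. For (b), uniform convergence of $h_i$ to $1$ on the support of $f \in C_c(\Sigma;G)$ transfers directly to $\|M_{h_i}f - f\|_r \to 0$ after controlling the reduced norm by the $I$-norm on compactly supported sections. For (c), when $h_i$ factors as $g^**g$ with $g$ supported in a precompact bisection collection, the map $M_{h_i}$ has finite-rank image in a sense appropriate to the bundle, giving the required finite-dimensional factorization.

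\smallskip

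\noindent\textbf{Nuclear implies amenable.} Assume $C_r^*(\Sigma;G)$ is nuclear. I would aim to produce, for each compact $K \subseteq G$ and $\varepsilon > 0$, a positive-type function $h \in C_c(G)$ with $\sup |h| \le 1$ and $|h(\gamma) - 1| < \varepsilon$ on $K$, which by Theorem~\ref{thm: amenable groupoid} gives amenability. By nuclearity, choose a c.p.\ factorization $\phi \colon C_r^*(\Sigma;G) \to M_n$ and $\psi \colon M_n \to C_r^*(\Sigma;G)$ with $\|\psi\phi(a) - a\|$ small on a chosen compact set of sections. Compose with $E \colon C_r^*(\Sigma;G) \to C_0(G^{(0)})$ and evaluate at points $x \in G^{(0)}$ to obtain a family of states $\omega_x$ on $M_n$. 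For a normalizer $n \in N(A,D)$ implementing a bisection $Z(n) \subseteq G$, the quantity $h(\gamma) := \omega_{r(\gamma)}(\phi(n)^*\phi(n))^{-1/2} \omega_{r(\gamma)}(\phi(n^*)\cdot)$ (with appropriate normalization on each bisection) assembles into a function on $G$ which is of positive-type by the Kasparov--Stinespring structure of $\phi$ and $\psi$. Checking positive-type reduces to the fact that for any finite family $F \subseteq G_x$, the matrix $[h(\eta^{-1}\gamma)]$ is a compression of the state on $M_n \otimes M_{|F|}$ induced by the Stinespring dilation, hence positive.

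\smallskip

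\noindent\textbf{Main obstacle.} The principal difficulty lies in the nuclear-implies-amenable direction, specifically in arranging that the approximating function $h$ actually lies in $C_c(G)$ (rather than merely being a bounded positive-type function) and that it depends continuously on $\gamma$ across different bisections in a compatible way. Bridging the bisections requires the Fell bundle structure of $L$ to cancel the $\bT$-ambiguity: the matrix coefficients of $\phi$ pick up cocycle phases when one changes local trivializations, and these phases must conspire so that the resulting $h$ descends to a genuine scalar function on $G$. The positive-type property of $h_i$ must be preserved under the quotient $\Sigma \to G$, which is essentially where the choice of the Fell line bundle (as opposed to a more general Fell bundle) enters. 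In spirit, this is the same issue resolved in the untwisted case in \cite[Theorem~5.6.18]{BrownOzawa}, but the twist version requires systematic bookkeeping of the $\bT$-action — this is presumably the technical content of Takeishi's argument in \cite{Tak2014}.
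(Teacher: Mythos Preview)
The paper does not prove this theorem at all: it is stated with the attribution ``c.f.~\cite[Theorem~5.4]{Tak2014}'' and no proof is given. It functions purely as a cited background result (alongside the untwisted version \cite[Theorem~5.6.18]{BrownOzawa} and the Fell-bundle version \cite[Corollary~6.2.14]{AmaRenbook}), so there is no in-paper argument against which to compare your proposal.

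That said, a brief comment on your sketch. The forward direction is broadly along the right lines and is close to how the paper itself \emph{uses} amenability later (see the proof of Theorem~\ref{thm: algebra support}, where the multipliers $m_{h_i}$ are exactly your $M_{h_i}$, and their complete positivity is cited from \cite[Lemma~4.2]{Tak2014}). Your step (c), however, is not quite right as written: multiplication by a compactly supported $h_i$ does not have finite-dimensional range in $C_r^*(\Sigma;G)$, so $M_{h_i}$ does not directly factor through a matrix algebra. The standard argument instead goes through $C_0(G^{(0)})$-nuclearity or through a tensor-product characterization, not through literal finite-rank approximation of $M_{h_i}$. For the reverse direction, your outline correctly identifies the difficulty --- extracting a genuine $C_c(G)$ positive-type function from a c.p.\ approximation while managing the $\bT$-cocycle --- but the formula you propose for $h(\gamma)$ is not well-defined as stated (the normalizer $n$ depends on a choice of bisection covering $\gamma$, and the expression mixes objects living over $r(\gamma)$ with a half-specified pairing). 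Resolving this is indeed the substance of Takeishi's argument, and your proposal does not yet supply it.
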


One can also construct the \emph{full} C$^*$-algebra of the twist $C^*(\Sigma;G)$ by completing $C_c(\Sigma;G)$ with respect to the supremum norm over all $*$-representations of $C_c(\Sigma;G)$.
If $f \in C_c(\Sigma;G)$ is supported on an open bisection $U \subseteq G$, then $f^* * f \in C_c(G^{(0)})$ and $\|f\|_{C^*(\Sigma;G)} \leq \|f\|_\infty$.
It follows that for each compact $K \subseteq G$ there is a $C_K>0$ such that if $f \in C_c(\Sigma;G)$ and $f$ is supported on $K$, then 
\begin{equation}\label{eq: full norm} %reference with section
	\|f\|_{C^*(\Sigma;G)} \leq C_k \|f\|_\infty;
\end{equation}
see \cite[page~205]{BrownOzawa}.
We note the following result due to Sims and Williams \cite{SimWil2013}.

\begin{theorem}[{cf.~\cite[Theorem~1]{SimWil2013}}]\label{thm: reduced eq full}
Let $\Sigma \rightarrow G$ be a twist, with $G$ an \'etale, Hausdorff, amenable groupoid.
Then the reduced C$^*$-algebra $C_r^*(\Sigma;G)$ is isomorphic to the full C$^*$-algebra $C^*(\Sigma;G)$.
\end{theorem}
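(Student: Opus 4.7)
The plan is to show that the canonical quotient map $\pi \colon C^*(\Sigma;G) \twoheadrightarrow C^*_r(\Sigma;G)$ is injective; since $\pi$ is automatically contractive, this will upgrade $\pi$ to an isometric $*$-isomorphism. By density, it suffices to produce a constant $C$ with $\|f\|_{C^*(\Sigma;G)} \leq C\,\|f\|_{C^*_r(\Sigma;G)}$ for every $f \in C_c(\Sigma;G)$, after which the equivalence of two $C^*$-norms forces their completions to coincide.

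The strategy is to use amenability to construct asymptotic cross sections of $\pi$. By Theorem~\ref{thm: amenable groupoid} there is a net $(h_i)$ of positive-type functions in $C_c(G)$, uniformly bounded in sup norm, converging to $1$ uniformly on compact subsets of $G$. For each $i$, pointwise multiplication
\[
(m_{h_i} f)(\gamma) := h_i(q(\gamma))\, f(\gamma)
\]
defines a linear endomorphism of $C_c(\Sigma;G)$. The technical heart of the argument is to promote $m_{h_i}$ to a completely positive map $\Phi_i \colon C^*_r(\Sigma;G) \to C^*(\Sigma;G)$ with $\|\Phi_i\| \leq \|h_i\|_\infty$. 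This is done via a GNS-type disintegration of the positive-type function $h_i$ over the source fibers $G_x$, writing $h_i(\eta^{-1}\gamma) = \langle \xi(\gamma), \xi(\eta)\rangle$ in a suitable Hilbert bundle, and using this representation to realize $\Phi_i$ as a completely positive ``Schur multiplier'' that factors through the reduced regular representation on the $L^2$-sections of the line bundle $L$.

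Granted the construction of $\Phi_i$, the conclusion is quick. For $f \in C_c(\Sigma;G)$ with support in a fixed compact $K$, each $m_{h_i}(f)$ is also supported in $K$ and converges uniformly to $f$; the norm estimate~\eqref{eq: full norm} then gives $m_{h_i}(f) \to f$ in the full $C^*$-norm. Since $m_{h_i}(f) = \Phi_i(\pi(f))$, we obtain
\[
\|f\|_{C^*(\Sigma;G)} = \lim_i \|\Phi_i(\pi(f))\|_{C^*(\Sigma;G)} \leq \bigl(\sup_i \|\Phi_i\|\bigr)\,\|f\|_{C^*_r(\Sigma;G)},
\]
which is the sought-after comparison. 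The main obstacle is the construction of $\Phi_i$: one must carefully adapt the classical passage from positive-type functions to completely positive multipliers so that the twist $\bT \hookrightarrow \Sigma \to G$ and the étale structure are respected, and so that the output lies in $C^*(\Sigma;G)$ with norm controlled uniformly in $i$. Once that twist-aware positive-type calculus is in place, the remaining approximation arguments are routine.
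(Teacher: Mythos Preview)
The paper does not give its own proof of this theorem: it is stated with a citation to \cite[Theorem~1]{SimWil2013} and no argument is supplied. So there is nothing in the paper to compare your proposal against.

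That said, your outline is essentially the standard proof (and is in fact the strategy of Sims--Williams and of the untwisted case in \cite[Theorem~5.6.18]{BrownOzawa}): use amenability to produce positive-type approximate units $(h_i)$ in $C_c(G)$, show that the associated Schur multipliers $m_{h_i}$ extend to completely positive maps $C^*_r(\Sigma;G)\to C^*(\Sigma;G)$ with uniformly bounded norm, and then use the estimate~\eqref{eq: full norm} together with uniform convergence on compacta to conclude that $\|f\|_{C^*(\Sigma;G)}$ is dominated by a constant times $\|f\|_{C^*_r(\Sigma;G)}$ on $C_c(\Sigma;G)$. The one point worth flagging is that the ``twist-aware positive-type calculus'' you allude to is precisely the content of \cite[Lemma~4.2]{Tak2014} (used elsewhere in the paper), so in a write-up you could simply invoke that lemma rather than redoing the GNS disintegration by hand. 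Also note that once the two $C^*$-norms on $C_c(\Sigma;G)$ are comparable they are automatically equal (a $C^*$-algebra has a unique $C^*$-norm), so the constant $C$ drops out at the end.
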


%%%%%%%%%%%%%%%%%%%%%%
\section{Intermediate subalgebras of Cartan embeddings}
Our primary goal is to study C$^*$-algebras $B$ satisfying $D \subseteq B \subseteq A$, when $D$ is Cartan in $A$.
In particular, does the (twisted) groupoid C$^*$-algebra structure of $A$ force $B$ to have a similar structure?
The main result of this section, Theorem~\ref{thm: clopen}, gives a positive answer in the case when there is a faithful conditional expectation from $A$ to $B$.
\begin{notation}
If $\Sigma \overset{q}\rightarrow G$ is a twist and $H \subseteq G$, we let $\Sigma_H = q^{-1}(H)$.
\end{notation}

We recall the following result.

\begin{lemma}[{c.f.\ \cite[Lemma~2.3]{BFPR}}]\label{lem: BFPR}
Let $\Sigma \overset{q}\rightarrow G$ be a twist.
Let $H$ be an open subgroupoid of $G$, with $G^{(0)} \subseteq H$.
Then $\Sigma_H \rightarrow H$ is a twist.
Further, the map of extension by zero of $C_c(\Sigma_H;H) \hookrightarrow C_c(\Sigma;G)$ induces an embedding $\iota$ of $C^*_r(\Sigma_H;H)$ into $C^*_r(\Sigma;G)$.
\end{lemma}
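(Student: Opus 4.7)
The plan is to handle the two assertions in turn. For the first, that $\Sigma_H \to H$ is a twist, I would note that $\Sigma_H = q^{-1}(H)$ is open in $\Sigma$ (since $q$ is continuous and $H$ is open in $G$) and is a subgroupoid of $\Sigma$ (since $q$ is a groupoid homomorphism and $H$ is a subgroupoid of $G$). The $\bT$-action on $\Sigma$ preserves the fibers of $q$, so it restricts to $\Sigma_H$. Because $G^{(0)} \subseteq H$, the entire kernel $\bT \times G^{(0)}$ of $q$ is contained in $\Sigma_H$, and so $\bT \times H^{(0)} \hookrightarrow \Sigma_H \twoheadrightarrow H$ is the required central extension, with $H^{(0)} = G^{(0)}$. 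The Fell line bundle over $H$ is the restriction of $L \to G$.

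For the second assertion, the plan is to pass through the full $\mathrm{C}^*$-algebra. Because $H$ is a subgroupoid, extension by zero $\iota_0 \colon C_c(\Sigma_H;H) \to C_c(\Sigma;G)$ is a $*$-algebra homomorphism: composable elements of $\Sigma_H$ compose inside $\Sigma_H$, and adjoints preserve support. Composing $\iota_0$ with the canonical inclusion $C_c(\Sigma;G) \hookrightarrow C^*_r(\Sigma;G)$ yields a $*$-representation of $C_c(\Sigma_H;H)$, which by the universal property of the full $\mathrm{C}^*$-algebra extends to a $*$-homomorphism $\td\iota \colon C^*(\Sigma_H;H) \to C^*_r(\Sigma;G)$.

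The next step is to compare conditional expectations. Let $E_G \colon C^*_r(\Sigma;G) \to C_0(G^{(0)})$ be the faithful conditional expectation, let $E_H \colon C^*_r(\Sigma_H;H) \to C_0(H^{(0)})$ be its counterpart at the reduced level, and let $\ol E_H := E_H \circ \lambda$ where $\lambda \colon C^*(\Sigma_H;H) \twoheadrightarrow C^*_r(\Sigma_H;H)$ is the canonical surjection. On $C_c(\Sigma_H;H)$, both $E_G \circ \td\iota$ and $\ol E_H$ act as restriction to the common unit space $G^{(0)} = H^{(0)}$, and this equality extends by continuity. Since $E_G$ is faithful, any $a$ with $\td\iota(a) = 0$ satisfies $\ol E_H(a^*a) = 0$, and faithfulness of $E_H$ on $C^*_r(\Sigma_H;H)$ then forces $\lambda(a) = 0$. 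Hence $\ker\td\iota \supseteq \ker\lambda$, so $\td\iota$ descends to a well-defined $*$-homomorphism $\iota \colon C^*_r(\Sigma_H;H) \to C^*_r(\Sigma;G)$, and an identical argument at the reduced level shows $\iota$ is injective.

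The main obstacle I anticipate is the conditional expectation identity $E_G \circ \td\iota = \ol E_H$; on compactly supported sections it is a matter of unwinding definitions, but one must keep careful track of the Fell line bundle over $H$ as a restriction of $L \to G$. An alternative, more hands-on approach would bypass $C^*(\Sigma_H;H)$ altogether: partition each source fiber $G_x$ into its $H$-cosets $H\beta$, identify $\ell^2(H\beta, L|_{H\beta})$ unitarily with $\ell^2(H_{r(\beta)}, L|_{H_{r(\beta)}})$ via right translation by $\beta$, and verify that $\pi_x^G(\iota_0(f))$ decomposes as a direct sum of representations unitarily equivalent to the reduced-norm-computing representations $\pi_{r(\beta)}^H(f)$, yielding $\|\iota_0(f)\|_{C^*_r(\Sigma;G)} = \|f\|_{C^*_r(\Sigma_H;H)}$ directly.
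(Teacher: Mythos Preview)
The paper does not actually prove this lemma; it is stated with a citation to \cite[Lemma~2.3]{BFPR} and used without further argument. So there is no ``paper's own proof'' to compare against here.

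That said, your main argument has a genuine logical slip. From $E_G\circ\td\iota=\ol E_H$ and the chain ``$\td\iota(a)=0\Rightarrow \ol E_H(a^*a)=0\Rightarrow \lambda(a)=0$'' you have proved $\ker\td\iota\subseteq\ker\lambda$, which is the \emph{opposite} of what you then assert. To make $\td\iota$ descend through $\lambda$ you need $\ker\lambda\subseteq\ker\td\iota$. Fortunately this follows by the symmetric argument you did not write down: if $\lambda(a)=0$ then $E_G(\td\iota(a)^*\td\iota(a))=\ol E_H(a^*a)=E_H(\lambda(a)^*\lambda(a))=0$, and now faithfulness of $E_G$ (which you invoked in the wrong place) gives $\td\iota(a)=0$. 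With both inclusions in hand you get $\ker\td\iota=\ker\lambda$, so $\td\iota$ descends to an injective $\iota$ in one stroke. Alternatively, you can bypass the full $\ca$-algebra entirely: the closure of $\iota_0(C_c(\Sigma_H;H))$ in $C^*_r(\Sigma;G)$ is a completion of $C_c(\Sigma_H;H)$ on which the restriction-to-units map extends to a faithful conditional expectation (namely $E_G$ restricted), and the reduced norm is characterized by exactly this property.

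Your ``alternative, more hands-on approach'' via decomposing $G_x$ into $H$-cosets and identifying the regular representation as a direct sum of copies of $\pi^H_{r(\beta)}$ is correct and is essentially the argument one finds in the cited reference; it yields the isometry $\|\iota_0(f)\|_{C^*_r(\Sigma;G)}=\|f\|_{C^*_r(\Sigma_H;H)}$ directly and avoids the detour through $C^*(\Sigma_H;H)$.
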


We now describe the Cartan intermediate subalgebras.

\begin{theorem}\label{thm: Cartan and open sub}
Let $D$ be Cartan in $A$, with twist $\Sigma \rightarrow G$.
Let $D \subseteq B \subseteq A$ be an intermediate subalgebra.
Then $D$ is Cartan in $B$ if and only if there exists an open subgroupoid $H \subseteq G$ with $H^{(0)} = G^{(0)}$ and $B = \iota(C_r^*(\Sigma_H;H))$, where $\iota$ is the inclusion from Lemma~\ref{lem: BFPR}.

Further, in this case $H = \{\gamma \in G \colon q^{-1}(\gamma)|_B \neq 0\}$.
\end{theorem}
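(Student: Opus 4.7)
The plan is to handle the two directions separately, using the natural identification of $B$ with the reduced C$^*$-algebra of its own Kumjian-Renault twist. For the forward direction, suppose such an open subgroupoid $H$ is given. Lemma~\ref{lem: BFPR} produces the twist $\Sigma_H \to H$ and the embedding $\iota$. Since $G$ is topologically principal (Theorem~\ref{thm: Kumjian Renault}) and $H \subseteq G$ is open with $H^{(0)} = G^{(0)}$, every unit of $G$ with trivial isotropy also has trivial isotropy in $H$, so $H$ is again topologically principal. Theorem~\ref{thm: Kumjian Renault} then says $D = C_0(H^{(0)})$ is Cartan in $C_r^*(\Sigma_H; H) = B$.

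For the converse, suppose $D$ is Cartan in $B$ with faithful conditional expectation $E_B \colon B \to D$. The first key observation is that $E_B = E_A|_B$, forced by the uniqueness of conditional expectations onto Cartan subalgebras \cite[Proposition~4.2]{Ren2008} applied to the faithful conditional expectation $E_A|_B \colon B \to D$. Consequently $N(B,D) \subseteq N(A,D)$, and for $n \in N(B,D)$ and $x \in s(n)$ the functional $[n,x]$ constructed from $(B, D, E_B)$ is the restriction to $B$ of the functional $[n,x]$ constructed from $(A, D, E_A)$. Thus the Kumjian-Renault twist of $(B,D)$ sits naturally inside $\Sigma \to G$ via extension of linear functionals.

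Next I would define $H = \{\gamma \in G \colon q^{-1}(\gamma)|_B \neq 0\}$ and prove the key identity
\[
H = \bigcup_{n \in N(B,D)} Z(n).
\]
The inclusion $\supseteq$ is immediate from $[n,x](n) = (n^*n)^{1/2}(x) \neq 0$ for $n \in N(B,D)$ and $x \in s(n)$. For $\subseteq$, given $\sigma = [n,x] \in q^{-1}(\gamma)$ with $\sigma|_B \neq 0$, the density of $\spn N(B,D)$ in $B$ supplies $b \in N(B,D)$ with $E(n^*b)(x) \neq 0$. Since $n^*b \in N(A,D)$ has $Z(n^*b) = Z(n)^{-1} \cdot Z(b)$, and since $E(m)(x) \neq 0$ forces $x$ to lie as a unit inside $Z(m)$ for any normalizer $m \in N(A,D)$, a short bisection argument yields $q([n,x]) = q([b,x])$, so $\gamma \in Z(b)$. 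From this description, openness of $H$ and closure under the groupoid operations follow from the corresponding properties of $N(B,D)$, and $H^{(0)} = G^{(0)}$ because every $x \in G^{(0)}$ lies in $Z(d)$ for some $d \in D \subseteq N(B,D)$.

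To identify $B$ with $\iota(C_r^*(\Sigma_H; H))$, the analysis above provides a canonical twist isomorphism from the Kumjian-Renault twist of $(B,D)$ onto $\Sigma_H \to H$ via restriction of linear functionals; composing the resulting isomorphism $B \cong C_r^*(\Sigma_H; H)$ with $\iota$ sends each $b \in B$ to an element of $A$ whose line-bundle section over $G$ coincides with the section originally representing $b$, so this composition equals the inclusion $B \hookrightarrow A$. The main obstacle is the identity characterizing $H$, specifically the step showing that $\sigma(b) \neq 0$ for $b \in N(B,D)$ locates $\gamma$ in $Z(b)$, which requires careful use of products of bisections together with the support behaviour of $E$ on normalizers; once this is in hand, the rest is bookkeeping with the Kumjian-Renault correspondence.
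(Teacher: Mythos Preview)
Your proposal is correct and follows essentially the same route as the paper: in both arguments the core step is showing that if $\sigma=[n,x]\in\Sigma$ satisfies $\sigma|_B\neq 0$, then regularity of $D$ in $B$ produces $m\in N(B,D)$ with $E(n^*m)(x)\neq 0$, and hence $[n,x]=[m,x]$, so that $H=\bigcup_{m\in N(B,D)}Z(m)$. The paper dispatches the implication $E(n^*m)(x)\neq 0\Rightarrow [n,x]=[m,x]$ by citing an external lemma (\cite[Lemma~8.7]{PitStrArx}), whereas you flag it as ``the main obstacle'' to be proved directly via bisection products and the support of $E$ on normalizers; either way the content is the same, and you also make explicit two points the paper leaves implicit, namely $E_B=E_A|_B$ and the fact that $H$ inherits topological principality from $G$.
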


\begin{proof}
Lemma~\ref{lem: BFPR} shows that when $H$ is an open subgroupoid $D$ is a Cartan subalgebra of $C_r^*(\Sigma_H;H)$.

Suppose $D \subseteq B \subseteq A$ with $D$ Cartan in $B$.
Take $[n,x]\in \Sigma$ such that $q([n,x]) \in H$.
Since we are assuming that $D$ is regular in $B$, it follows that there is a normalizer $m \in N(B,D)$ such that $[n,x](m) \neq 0$.
That is,
$$ \frac{E(n^*m)(x)}{n^*n(x)^{1/2}} \neq 0. $$
As $E(n^*m)(x) \neq 0$ it follows that $[n,x] = [m,x]$ \cite[Lemma~8.7]{PitStrArx}.
As $N(B,D)$ is a $^*$-semigroup, it follows that $\Sigma_H$ and $H$ are groupoids.
That $H$ is open follows from the definition of the topology on $G$.

Finally, that $B = C_r^*(\Sigma_H;H)$ follows from Theorem \ref{thm: Kumjian Renault}.
\end{proof}

Given C$^*$-algebras $D \subseteq B \subseteq A$ with $D$ Cartan in $A$, it is not, in general, easy to tell whether $D$ is Cartan in $B$.
Indeed it may be not be.
See Example~\ref{ex: top free act}.
We will now give a class of intermediate C$^*$-algebras $B$ where we are guaranteed that $D$ is Cartan in $B$.
These will be in one-to-one correspondence with the subgroupoids $G^{(0)} \subseteq H \subseteq G$ with $H$ both open and closed in $G$.

In the Galois correspondence results in \cite{Cho7980, Izumi1998, CamSmi2019} there is always a conditional expectation onto the intermediate algebras $A \subseteq B \subseteq A\rtimes_r \Gamma$.
In the following theorem we classify the intermediate subalgebras  $B$ in Cartan embeddings for which there is a conditional expectation onto $B$.

\begin{lemma}\label{lem: clopen}
Let $\Sigma \rightarrow G$ and let $H$ an open subgroupoid of $G$.
Then there is a conditional expectation $F \colon C_r^*(\Sigma;G) \rightarrow C_r^*(\Sigma_H;H)$ if and only if $H$ is closed.
\end{lemma}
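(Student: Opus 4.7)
The statement has two directions and I would handle them in the order $(\Leftarrow)$ then $(\Rightarrow)$.

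For $(\Leftarrow)$, assume $H$ is clopen in $G$. Since $q$ is continuous, $\Sigma_H = q^{-1}(H)$ is clopen in $\Sigma$, so the restriction $F(f) = f|_{\Sigma_H}$ defines a linear map $C_c(\Sigma;G) \to C_c(\Sigma_H;H)$ (continuous by restriction, compactly supported because $\mathrm{supp}(f) \cap \Sigma_H$ is the intersection of a compact set with a closed set). The real work is showing $F$ is contractive in the reduced norms. For this, I would compare regular representations: for each $x \in G^{(0)} = H^{(0)}$, the inclusion $\ell^2(H_x;L) \hookrightarrow \ell^2(G_x;L)$ has orthogonal projection $p_{H_x}$, and a direct calculation (using only that $H$ is a subgroupoid, so the compositions stay inside $H$) yields
\[
\lambda_x^H(F(f)) = p_{H_x}\, \lambda_x^G(f)\, p_{H_x}\big|_{\ell^2(H_x;L)}
\qquad\text{for all } f\in C_c(\Sigma;G).
\]
Taking the supremum over $x$ gives $\|F(f)\|_r \leq \|f\|_r$, so $F$ extends to a contraction $C_r^*(\Sigma;G) \to C_r^*(\Sigma_H;H)$. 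Since this extension is idempotent and acts as the identity on $\iota(C_r^*(\Sigma_H;H))$, Tomiyama's theorem identifies it as a conditional expectation.

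For $(\Rightarrow)$, suppose a conditional expectation $F\colon C_r^*(\Sigma;G) \to C_r^*(\Sigma_H;H)$ exists, and argue by contradiction: assume $H$ is not closed and choose $\gamma_0 \in \overline{H} \setminus H$ together with a lift $\eta_0 \in q^{-1}(\gamma_0)$. The heart of the argument is the pointwise identity
\[
\eta(F(a)) = \eta(a) \qquad \text{for all } \eta \in \Sigma_H,\ a \in C_r^*(\Sigma;G).
\]
To establish this, write $\eta = [u,x]$ for a normalizer $u \in N(C_r^*(\Sigma_H;H), D)$. Bimodularity of $F$ gives $u^*F(a) = F(u^*a)$, and by uniqueness of the faithful conditional expectation onto the Cartan subalgebra $D$ (Renault's Proposition~4.2) one has $E \circ F = E$ (since $E\circ F$ is a conditional expectation onto $D$). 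Plugging into the formula $\eta(c) = E(u^*c)(x)/(u^*u)(x)^{1/2}$ yields the identity. A parallel, easier argument shows that $\eta'(b) = 0$ whenever $\eta' \in \Sigma \setminus \Sigma_H$ and $b \in C_r^*(\Sigma_H;H)$: approximate $b$ in norm by elements of $C_c(\Sigma_H;H)$ (which vanish off $\Sigma_H$) and use norm-continuity of $\eta'$.

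To produce the contradiction, pick $f \in C_c(\Sigma;G)$ with $\eta_0(f) \neq 0$ (a bump function at $\eta_0$). Using that $q \colon \Sigma \to G$ is a locally trivial $\bT$-bundle, lift a net $\gamma_\alpha \to \gamma_0$ with $\gamma_\alpha \in H$ to a net $\eta_\alpha \to \eta_0$ with $\eta_\alpha \in \Sigma_H$. The first identity gives $\eta_\alpha(F(f)) = \eta_\alpha(f)$; passing to the limit via weak$^*$-continuity (which is built into the topology on $\Sigma$) yields $\eta_0(F(f)) = \eta_0(f) \neq 0$. But $F(f) \in C_r^*(\Sigma_H;H)$ and $\eta_0 \notin \Sigma_H$, so the second identity forces $\eta_0(F(f)) = 0$, a contradiction. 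I expect the main technical obstacle to be the verification of the pointwise identity $\eta(F(a)) = \eta(a)$ on $\Sigma_H$, which is where bimodularity of $F$ and Renault's uniqueness of the conditional expectation must be combined carefully; once this is established, the remainder of the $(\Rightarrow)$ direction is a continuity argument in the weak$^*$-topology of $\Sigma$.
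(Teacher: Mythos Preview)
Your argument is correct, and the overall architecture---contradiction via a boundary point of $H$, a pointwise identity on $\Sigma_H$, then a continuity/limit argument---matches the paper's. The route to the key pointwise identity is genuinely different, however.

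The paper never invokes Renault's uniqueness of $E$. Instead, for each $\gamma_n \in H$ near the boundary point $\gamma$, it picks a bump function $g_n \in C_c(G^{(0)})$ supported so narrowly that $g_n * f$ lands in $C_c(\Sigma_H;H)$ and is therefore fixed by $F$; then $D$-bimodularity alone gives $F(f)(\gamma_n) = (g_n * F(f))(\gamma_n) = F(g_n*f)(\gamma_n) = (g_n*f)(\gamma_n) = f(\gamma_n)$. Your approach replaces this local bump-function trick with the global identity $E\circ F = E$ (via uniqueness of the Cartan expectation) together with $B$-bimodularity, where $B=C_r^*(\Sigma_H;H)$: writing $\eta=[u,x]$ with $u\in N(B,D)$ yields $\eta(F(a))=\eta(a)$ for every $\eta\in\Sigma_H$. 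This is cleaner and gives a stronger intermediate statement, but it uses the Cartan hypothesis (topological principality of $G$) and the identification of $\Sigma_H$ with the Weyl twist of $(B,D)$, whereas the paper's argument is more elementary and works for an arbitrary \'etale twist. In effect, the paper's $g_n$ is doing by hand what your choice of $u\in N(B,D)$ accomplishes abstractly: localizing the normalizer so that it lies in the subalgebra.

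For the converse direction, the paper simply asserts that restriction induces a faithful conditional expectation when $H$ is clopen; your regular-representation compression argument $\lambda_x^H(F(f)) = p_{H_x}\lambda_x^G(f)p_{H_x}$ is a correct and more detailed justification of that line.
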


\begin{proof}
We will follow a similar line of proof as in \cite[Proposition~4.1]{BNRSW2016}.
Assume $H$ is not closed.
Then there is a net $(\gamma_n)$ in $H$ which converges to some $\gamma \in G\backslash H$.
Let $U\in G$ be an open bisection containing $\gamma$ and choose $f \in C_c(\Sigma;G)$ supported on $U$ such that $|f(\gamma)| =1$.
As $\gamma_n\rightarrow \gamma$, we may assume that each $\gamma_n \in U$.
Since $H$ is open, there are open sets $V_n \subseteq H \cap U$ such that $\gamma_n \in V_n$.
For each $n$ we choose $g_n \in C_c(G^{(0)})$ with $g_n$ supported on $r(V_n)$ and $g_n(r(\gamma_n)) = 1$.

Since for each $n$, $g_n \in C_c(G^{(0)}) \subseteq C_c(\Sigma_H;H)$, it follows that
$$ F(g_n * f) = g_n * F(f). $$
Further, note $g_n * f$ is supported on $V_n \subseteq H$.
Thus $g_n * f \in C_c(\Sigma_H;H)$ and $F(g_n * f) = g_n * f$.
Finally, note that for any $h \in C_r^*(\Sigma;G)$, $g_n * h(\gamma_n) = h(\gamma_n)$, since $g_n(r(\gamma_n)) =1$.
Hence
$$ F(f) (\gamma_n) = g_n* F(f)(\gamma_n) = F(g_n * f)(\gamma_n) =g_n * f(\gamma_n) = f(\gamma_n).$$
Since $|f(\gamma) = 1|$, it follows that $|F(f) (\gamma_n)| \rightarrow 1$.
Thus $|F(f)(\gamma)| =1$. 
However, since $\gamma \notin H$, and $F(f) \in C_c(\Sigma_H;H)$, we must have $F(f)(\gamma) = 0$.
This is a contradiction, and hence $H$ is closed.

Conversely, if $H$ is both open and closed in $G$ then the restriction map induces a faithful conditional expectation from $C_r^*(\Sigma;G)$ onto $C_r^*(\Sigma_H;H)$.
\end{proof}

\begin{theorem}\label{thm: clopen}
Let $A$ be a C$^*$-algebra and let $D \subseteq A$ be a Cartan subalgebra.
Let $\Sigma \rightarrow G$ be the corresponding twist.
The following statements hold.
\begin{enumerate}
\item Let $D \subseteq B \subseteq A$ be an intermediate subalgebra. If there is a conditional expectation $F \colon A \rightarrow B$ then $D$ is Cartan in $B$.
\item The map $H \mapsto C_r^*(\Sigma_H;H)$ gives a one-to-one correspondence between subgroupoids $G^{(0)} \subseteq H \subseteq G$ which are both open and closed in $G$, and the intermediate C$^*$-algebras $D \subseteq B \subseteq A$ for which there is a  conditional expectation $F \colon A \rightarrow B$.
\end{enumerate}
\end{theorem}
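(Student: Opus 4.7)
My plan is to prove (i), and then obtain (ii) as a consequence of (i) together with Theorem~\ref{thm: Cartan and open sub} and Lemma~\ref{lem: clopen}.

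For (i), let $F\colon A\to B$ be a faithful conditional expectation. Since $E\circ F\colon A\to D$ is then a faithful conditional expectation onto $D$, the uniqueness remark following Definition~\ref{def: Cartan} (see \cite[Proposition~4.2]{Ren2008}) yields $E\circ F=E$. From this the Cartan axioms for $D\subseteq B$ other than regularity are straightforward: $E|_B$ is a faithful conditional expectation onto $D$; $D$ inherits maximal abelianness from its role in $A$; and an approximate unit for $A$ lying in $D$ remains one for $B$. The real content of (i) is that $D$ is regular in $B$.

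The strategy for regularity is to show that $F$ is given by pointwise multiplication by an indicator function on $G$. Viewing elements of $A=C_r^*(\Sigma;G)$ as sections of the line bundle $L\to G$, I would establish the existence of $m\colon G\to\{0,1\}$ such that $F(a)(\gamma)=m(\gamma)\,a(\gamma)$ for all $a\in A$ and $\gamma\in G$. The $D$-bimodularity of $F$ gives $F(d_1 a d_2)(\gamma)=d_1(r(\gamma))\,F(a)(\gamma)\,d_2(s(\gamma))$, and compressing by bump functions in $D$ localized near $r(\gamma)$ and $s(\gamma)$ shows that $F(a)(\gamma)$ depends only on $a(\gamma)\in L_\gamma$; linearity and one-dimensionality of the fibers of $L$ produce a scalar $m(\gamma)\in\bC$, and idempotence $F^2=F$ forces $m(\gamma)\in\{0,1\}$. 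Setting $H:=\{\gamma\in G:m(\gamma)=1\}$, one then checks: $G^{(0)}\subseteq H$ because $F$ fixes $D$; $H$ is closed under inversion because $F(a^*)=F(a)^*$ and $a^*(\gamma^{-1})=\overline{a(\gamma)}$; $H$ is closed under the groupoid product by applying the multiplier formula to the $F$-fixed element $F(a_1)F(a_2)\in B$ with $a_i$ supported on small bisections about composable $\gamma_i\in H$; and $H$ is open by continuity of the sections of $L$ corresponding to elements of $A$. With $H$ identified as an open subgroupoid with $H^{(0)}=G^{(0)}$, any $n\in N(A,D)$ supported on an open bisection $U$ has $F(n)$ supported on the open bisection $U\cap H$, hence $F(n)\in N(A,D)\cap B=N(B,D)$. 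Since $N(A,D)$ densely spans $A$ and $F$ is bounded, $F(N(A,D))$ densely spans $B=F(A)$, proving regularity.

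For (ii), the direction sending a clopen subgroupoid $G^{(0)}\subseteq H\subseteq G$ to the intermediate subalgebra $\iota(C_r^*(\Sigma_H;H))$ is immediate: Lemma~\ref{lem: BFPR} provides the embedding and confirms that $D$ is Cartan in the image, while Lemma~\ref{lem: clopen} supplies the conditional expectation. Conversely, if $D\subseteq B\subseteq A$ admits a faithful conditional expectation $F\colon A\to B$, then (i) gives $D$ Cartan in $B$; Theorem~\ref{thm: Cartan and open sub} produces an open subgroupoid $H\subseteq G$ with $H^{(0)}=G^{(0)}$ and $B=\iota(C_r^*(\Sigma_H;H))$; and Lemma~\ref{lem: clopen} applied to the conditional expectation $F$ onto $\iota(C_r^*(\Sigma_H;H))$ forces $H$ to be closed as well.

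The principal technical obstacle is the indicator representation of $F$, in particular the locality step asserting that $F(a)(\gamma)$ depends only on $a(\gamma)$. Justifying the compression-by-bump-functions limit requires controlling $\|d_1 a d_2\|$ in terms of the behaviour of $a$ near $\gamma$ for generic $a\in A$, which need not be compactly supported on a bisection; the \'etale structure of $G$, continuity of the sections corresponding to elements of $A$, and the comparison between reduced C$^*$-norm and sup norm on bisection-supported sections recorded in~\eqref{eq: full norm} are the natural tools.
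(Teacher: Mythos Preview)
Your treatment of (ii) matches the paper's: once (i) is in hand, apply Theorem~\ref{thm: Cartan and open sub} to get an open subgroupoid $H$ with $B=\iota(C_r^*(\Sigma_H;H))$, then invoke Lemma~\ref{lem: clopen} to get $H$ closed. One small remark: the hypothesis of (i) does not assume $F$ is faithful; the paper first observes that $E|_B\circ F$ is a conditional expectation onto $D$, hence equals $E$ by uniqueness, and \emph{deduces} from faithfulness of $E$ that $F$ is faithful. You should do the same rather than assume it.

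The substantive divergence is in your proof of regularity for (i). You attempt to show directly that $F$ acts on sections by pointwise multiplication by an indicator $m\colon G\to\{0,1\}$, and you correctly flag the locality step ``$F(a)(\gamma)$ depends only on $a(\gamma)$'' as the crux. Your proposed justification---compressing by bump functions $d_1,d_2$ in $D$ peaked at $r(\gamma),s(\gamma)$ and shrinking supports---does not go through when $G$ is merely topologically principal. If the isotropy at $r(\gamma)$ is nontrivial there may exist $\gamma'\neq\gamma$ with $r(\gamma')=r(\gamma)$ and $s(\gamma')=s(\gamma)$; then $(d_1 a d_2)(\gamma')=a(\gamma')$ regardless of how small the supports of $d_1,d_2$ are, so $\|d_1 a d_2\|$ need not tend to $0$ even when $a(\gamma)=0$. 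Thus the compression argument cannot isolate the fiber $L_\gamma$, and the multiplier formula remains unproved. (The formula is in fact true---it follows \emph{a posteriori} from the paper's argument that $F$ is restriction to a clopen $H$---but your direct route to it has a gap exactly in the non-principal Cartan case.)

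The paper sidesteps this entirely with a short algebraic argument that does not look at $F$ pointwise at all. Call $n\in A$ an \emph{intertwiner} of $D$ if for each $d\in D$ there is $d'\in D$ with $nd=d'n$. $D$-bimodularity of $F$ gives immediately $F(n)d=F(nd)=F(d'n)=d'F(n)$, so $F$ sends intertwiners to intertwiners. Now invoke \cite[Proposition~3.3]{DonPit2008}: in a Cartan pair, $N(A,D)$ is the norm closure of the set of intertwiners. Continuity of $F$ then yields $F(N(A,D))\subseteq N(B,D)$, and since $F$ is surjective onto $B$ and $\spn N(A,D)$ is dense in $A$, regularity of $D$ in $B$ follows. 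This avoids any analysis of $F$ on the groupoid and works uniformly for all Cartan pairs.
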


\begin{proof}
Let $B$ be a C$^*$-algebra such that $D \subseteq B \subseteq A$ and
there is a conditional expectation $F \colon A \rightarrow B$. 
Let $E \colon A \rightarrow D$ be the unique faithful conditional expectation.
Then $(E|_B)\circ F=E$, from which it follows that $F$ is
necessarily faithful.

Let $n\in A$ be an intertwiner of $D$. 
That is, for each $d \in D$ there is a $d'\in D$ such that $nd= d'n$.
It follows that for $d\in D$
$$ F(n)d = F(nd)= F(d'n) = d'F(n). $$
Hence $F(n)$ is also an intertwiner of $D$.  By
\cite[Proposition~3.3]{DonPit2008} the normalizers $N(A,D)$ are the
closure of all intertwiners.  Since $F$ is a continuous map, it follows
that if $n \in N(A,D)$ is a normalizer then $F(n) \in N(B,D)$.  Since $D$ is regular in $A$, if $b \in B$ there
is a sequence $(a_n)_n \in \spn N(A,D)$ such that $a_n \rightarrow b$.  Hence
$F(a_n) \rightarrow F(b)=b$.  It follows that $\spn N(B,D)$ is dense
in $B$, so $D$ is Cartan in $B$.

By Theorem~\ref{thm: Cartan and open sub}, since $D$ is Cartan in $B$, there is an open subgroupoid $H \subseteq G$ with $H^{(0)} = G^{(0)}$ such that $C_r^*(\Sigma_H;H)$. 
By Lemma~\ref{lem: clopen}, $H$ is also closed.
\end{proof}

%%%%%%%%%%%%%%%%%%%%%%%%%%%%%%%%%%%
\section{Nuclear C$^*$-algebras and intermediate algebras of C$^*$-diagonals}
In this section we consider C$^*$-algebras $A$ containing a C$^*$-diagonal $D$, with associated twist $\Sigma \rightarrow G$.
We will see that if $D \subseteq B \subseteq A$ and $A$ is nuclear, then $D$ is necessarily a C$^*$-diagonal in $B$.
This will gives a one-to-one correspondence between open subgroupoids of $G$ and the intermediate C$^*$-algebras $B$, Theorem~\ref{thm: main}.

The following theorem tells us that, in the amenable case, determining whether or not an element $a\in C_r^*(\Sigma;G)$ lies in $C_r^*(\Sigma_H;H)$ or not, depends solely on the support of $a$ as a function in $C_0(\Sigma;G)$.

\begin{notation}
Let $\Sigma \rightarrow G$ be a twist.
If $H \subseteq G$ is an open subgroupoid we let
$$ A_H = \{ f\in C_r^*(\Sigma;G) \colon \text{for all } \gamma \in \Sigma\backslash\Sigma_H\ \gamma(f) = 0 \}.$$ 
\end{notation}

In general $C_r^*(\Sigma_H;H) \subseteq A_H$.
We suspect this containment may be strict in some circumstances.

\begin{theorem}\label{thm: algebra support}
Let $\Sigma \rightarrow G$ be a twist with $G$ an \'etale, Hausdorff, amenable groupoid.
Let $H$ be an open subgroupoid of $G$.
Then $A_H$ is equal to the canonical copy (as in Lemma~\ref{lem: BFPR}) of $C_r^*(\Sigma_H;H)$ within $C_r^*(\Sigma;G)$.
\end{theorem}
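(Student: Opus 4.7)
The containment $\iota(C_r^*(\Sigma_H;H)) \subseteq A_H$ is clear, since for $g \in C_c(\Sigma_H;H)$ we have $\gamma(g) = 0$ whenever $\gamma \in \Sigma \setminus \Sigma_H$, and this vanishing passes to norm limits because each $\gamma \in \Sigma$ is a continuous linear functional on $C_r^*(\Sigma;G)$. The content of the theorem is the reverse inclusion $A_H \subseteq \iota(C_r^*(\Sigma_H;H))$.

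The plan is to combine amenability with a Urysohn-style cutoff. By Theorem~\ref{thm: amenable groupoid} there is a sup-bounded net of positive-type functions $h_i \in C_c(G)$ tending to $1$ uniformly on compact subsets of $G$. The standard positive-type multiplier construction (extending \cite[Theorem~5.6.18]{BrownOzawa} to the twisted case) produces bounded completely positive maps $M_{h_i}\colon C_r^*(\Sigma;G) \to C_r^*(\Sigma;G)$ that act on $C_c(\Sigma;G)$ by $M_{h_i}(a)(\gamma) = h_i(\gamma) a(\gamma)$ and satisfy $M_{h_i}(a) \to a$ in norm for every $a \in C_r^*(\Sigma;G)$. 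Identifying each $a \in C_r^*(\Sigma;G)$ with its continuous section $\hat a(\gamma) = \gamma(a)$ (which lies in $C_0(\Sigma;G)$), the element $M_{h_i}(f)$ is represented by the compactly supported continuous section $h_i \cdot \hat f$, so $M_{h_i}(f) \in C_c(\Sigma;G)$. When $f \in A_H$, $\hat f$ vanishes off $\Sigma_H$, and so does $M_{h_i}(f)$.

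It therefore suffices to show that any $g \in C_c(\Sigma;G)$ vanishing off $\Sigma_H$ lies in $\iota(C_r^*(\Sigma_H;H))$. Set $K = \supp(g)$, compact in $G$. For each $\epsilon > 0$, the set $V_\epsilon = \{\gamma \in G : |g(\gamma)| \geq \epsilon\}$ is compact and, by hypothesis on $g$, contained in the open subgroupoid $H$. A routine locally-compact Hausdorff Urysohn argument then gives $\phi_\epsilon \in C_c(H;[0,1])$ with $\phi_\epsilon \equiv 1$ on $V_\epsilon$. Then $\phi_\epsilon \cdot g \in C_c(\Sigma_H;H)$, while $\|g - \phi_\epsilon g\|_\infty \leq \epsilon$ and $\supp(g - \phi_\epsilon g) \subseteq K$. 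Applying inequality~\eqref{eq: full norm} together with Theorem~\ref{thm: reduced eq full} (which equates full and reduced norms under amenability) yields $\|g - \phi_\epsilon g\|_{C_r^*(\Sigma;G)} \leq C_K \epsilon$. Letting $\epsilon \to 0$ places $g$ in the $C_r^*$-closure of $C_c(\Sigma_H;H)$, which equals $\iota(C_r^*(\Sigma_H;H))$ by Lemma~\ref{lem: BFPR}; combined with $M_{h_i}(f) \to f$, this places $f$ in $\iota(C_r^*(\Sigma_H;H))$.

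The main obstacle is that amenability must be used twice: first to produce the norm-convergent cp multipliers $M_{h_i}$ that reduce the problem to the compactly supported continuous world, and again to invoke Theorem~\ref{thm: reduced eq full} so the transparent sup-norm estimate on the cutoff transfers to a reduced-norm estimate. Without amenability a compactly supported continuous section vanishing off $\Sigma_H$ need not be approximable in $C_r^*$-norm by sections supported in $\Sigma_H$, which is presumably why the weaker inclusion $C_r^*(\Sigma_H;H) \subseteq A_H$ may be strict in general, as noted just before the theorem.
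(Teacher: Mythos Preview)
Your proof is correct and follows essentially the same strategy as the paper: use the positive-type multipliers $m_{h_i}$ from amenability (via \cite[Lemma~4.2]{Tak2014}) to push an arbitrary $f\in A_H$ into $C_c(\Sigma;G)$ while preserving the vanishing off $H$, then use $C^*=C_r^*$ and \eqref{eq: full norm} to get norm convergence $m_{h_i}(f)\to f$.

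The one place you differ is that you insert an additional Urysohn cutoff step to pass from ``$g\in C_c(\Sigma;G)$ vanishes off $H$'' to ``$g\in \iota(C_r^*(\Sigma_H;H))$''. The paper simply asserts $m_{h_i}(f)\in C_c(\Sigma_H;H)$ at this point. Strictly speaking your caution is warranted: vanishing off $H$ only gives $\{g\neq 0\}\subseteq H$, and since $H$ need not be closed the support of $g$ (closure of $\{g\neq 0\}$) may meet $G\setminus H$, so $g|_H$ need not have compact support in $H$. Your cutoff argument, together with the second use of amenability to transfer the $\sup$-norm estimate to the reduced norm, is exactly what is needed to close this gap. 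So your proof is the same in spirit but more rigorous on this one point.
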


\begin{proof}
By Theorem~\ref{thm: amenable groupoid}, there is a net of positive-type functions $h_i \in C_c(G)$ converging uniformly to $1$ on compact subsets of $G$, with $\sup_{\gamma \in G}|h_i(\gamma)| \leq 1$.
View $C_c(\Sigma;G)$ as continuous cross-sections.
For each $i$, define a map
\begin{align*}
	m_{h_i} \colon  C_c(\Sigma;G) &\rightarrow C_c(\Sigma;G) \\
	 f &\mapsto h_if,
\end{align*}
where for $\gamma \in G$ and $f(\gamma)= [\lambda_\gamma,\gamma]$ we have $(h_if)(\gamma) = [\lambda_\gamma h_i(\gamma),\gamma]$.
By \cite[Lemma~4.2]{Tak2014}, $m_{h_i}$ extends to a contractive completely positive map on $C_r^*(\Sigma;G)$.
As $G$ is amenable, $C_r^*(\Sigma;G) = C^*(\Sigma;G)$, by Theorem~\ref{thm: reduced eq full}.
Note that $m_{h_i}(f)$ converges to $f$ in $C^*(\Sigma;G)=C_r^*(\Sigma;G)$, by Equation~\eqref{eq: full norm}.
We further note that, for any $f \in C_r^*(\Sigma;G)$, $m_{h_i}(f) \in C_c(\Sigma;G)$.

Suppose $f\in A_H$. 
To show that $f \in C_r^*(\Sigma_H;H)$ it suffices to show that $m_{h_i}(f) \in C_r^*(\Sigma_H;H)$ for every $i$.
As $m_{h_i}(f)$ lies in $C_c(\Sigma;G)$, and vanishes off $H$, $m_{h_i}(f) \in C_c(\Sigma_H;H)$.
It follows that $f \in C_r^*(\Sigma_H;H)$, concluding the proof.
\end{proof}

We recall the following corollary to Theorem~\ref{thm: ABG}.
It was observed in the unital case by Muhly, Qiu and Solel  in the proof of Proposition~4.4 of \cite{MQS1991}.

\begin{proposition}\label{prop: fourier coefs exist}
Suppose $D \subseteq A$ is a C$^*$-diagonal.
Then, for any $a\in A$ and normalizer $n \in N(A,D)$, $nE(n^*a)$ is a normalizer of $D$ in the norm-closed $D$-bimodule generated by $a$.
\end{proposition}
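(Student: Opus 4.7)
The plan is to break the statement into two independent claims: that $nE(n^*a)$ is a normalizer of $D$, and that it lies in the norm-closed $D$-bimodule $\overline{DaD}$ generated by $a$ (which coincides with the $D$-bimodule generated by $a$ since $D$ contains an approximate unit for $A$).

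The normalizer claim is immediate. Since $E$ maps into $D$, the element $d := E(n^*a)$ lies in $D$. A direct check shows that $N(A,D)$ is closed under right multiplication by elements of $D$: for any $n\in N(A,D)$ and $d\in D$, $(nd)D(nd)^* = n(dDd^*)n^* \subseteq nDn^* \subseteq D$, and similarly $(nd)^*D(nd) \subseteq D$. Hence $nE(n^*a) \in N(A,D)$.

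For the bimodule claim, first assume $A$ is unital. Apply Theorem~\ref{thm: ABG} to the element $n^*a \in A$ to obtain
$$E(n^*a) \in \overline{\conv}\{u\,n^*a\,u^* : u\in D \text{ unitary}\}.$$
Left-multiplication by $n$ is bounded and linear, so
$$nE(n^*a) \in \overline{\conv}\{(nun^*)\,a\,u^* : u\in D \text{ unitary}\}.$$
Since $n$ normalizes $D$, each $nun^*$ lies in $D$, and hence every term $(nun^*)au^*$ lies in $DaD$. Taking closures, $nE(n^*a) \in \overline{DaD}$, as required.

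For the non-unital case, the plan is to pass to the unitizations $A^+ = A + \bC 1$ and $D^+ = D + \bC 1$, and verify that $D^+$ remains a C$^*$-diagonal of $A^+$ so the unital argument applies. Maximal abelianness, regularity, and the extension $E^+(a + \lambda 1) = E(a) + \lambda 1$ of the conditional expectation transfer routinely. The only nontrivial check is the unique extension property: a pure state of $D^+$ restricting to a pure state of $D$ extends uniquely to $A^+$ by the original hypothesis on $D \subseteq A$, and the unique character at infinity of $D^+$ extends uniquely to $A^+$ because if $\phi$ is a state of $A^+$ vanishing on $D$ then Cauchy--Schwarz against an approximate unit $(e_\lambda) \subseteq D$ of $A$ gives $|\phi(a)|^2 = \lim|\phi(e_\lambda a)|^2 \le \lim \phi(e_\lambda^2)\phi(a^*a) = 0$ for every $a\in A$. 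Applying the unital case inside $A^+$ yields $nE(n^*a) \in \overline{D^+ a D^+}$, and one final use of the approximate unit in $D$ shows $\overline{D^+ a D^+} = \overline{DaD}$, completing the proof.

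The main obstacle is the non-unital reduction, in particular the unique extension property for $D^+ \subseteq A^+$; the core argument in the unital case is a short and direct consequence of Theorem~\ref{thm: ABG}.
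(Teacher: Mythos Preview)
Your proof is correct and follows essentially the same approach as the paper: both derive the unital case from Theorem~\ref{thm: ABG} by applying it to $n^*a$ and left-multiplying by $n$, and both handle the non-unital case by passing to the unitization and verifying that $D+\bC I$ retains the unique extension property in $\tilde A$. Your version is in fact more self-contained---you spell out the unital argument and the Cauchy--Schwarz verification for the state at infinity, whereas the paper defers these to \cite{DonPit2008} and \cite{ABG1982}---and your separation of the normalizer claim as an elementary closure property of $N(A,D)$ under right $D$-multiplication is a clean touch that avoids needing the unitization for that half of the statement.
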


\begin{proof}
If $A$ is unital the result follows from Theorem~\ref{thm: ABG}.
Details can be found in \cite[Proposition~3.1]{DonPit2008}.

Suppose now that $A$ is not unital.
Let $\tilde{A}$ be the unitization of $A$.
It can be shown that $D + \bC I$ is a maximal abelian subalgebra of $\tilde{A}$ and $D + \bC I$ has the unique extension property in $\tilde{A}$.
By \cite[Corollary~2.7]{ABG1982} there is a conditional expectation $\tilde{E} \colon \tilde{A} \rightarrow D + \bC I$ determined by
$$ \tilde{E}(x) = \overline{\conv} \{uxu^* \colon u \in D + \bC I \text{ unitary} \} \cap (D + \bC I). $$
By \cite[Remarks~2.8(iii)]{ABG1982}, the restriction of $\tilde{E}$ to $A$ maps $A$ onto $D$.
As the conditional expectation $E \colon A \rightarrow D$ is unique \cite[Proposition~4.3]{Ren2008}, $\tilde{E}|_A = E$.

Note that if $n\in N(A, D)$ then $n \in N(\tilde{A}, D + \bC I)$.
It follows now as in the unital case, that for any $n \in N(A, D)$, $nE(n^*a)$ normalizes $D$ and is in the norm-closed $D$-bimodule generated by $A$.
\end{proof}

We now have all the ingredients for the main theorems of this section.

\begin{theorem}\label{thm: maina}
Let $D$ be a C$^*$-diagonal of $A$ with associated twist $\Sigma \rightarrow G$.
Let $D \subseteq B \subseteq A$ be an intermediate subalgebra.
Let $H = \{ q(\sigma) \colon \sigma \in \Sigma,\ \sigma|B\neq 0\}.$
Then $H$ is an open subgroupoid of $G$ and
$$ \ol{\spn} N(B,D) = C_r^*(\Sigma_H;H) \subseteq B \subseteq A_H. $$
\end{theorem}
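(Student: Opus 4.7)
My plan is to first prove that $H$ is an open subgroupoid of $G$ (with $H^{(0)}=G^{(0)}$), then identify $\overline{\spn}\,N(B,D)$ with $C_r^*(\Sigma_H;H)$ by applying Theorem~\ref{thm: Cartan and open sub} to the intermediate C$^*$-algebra $\overline{\spn}\,N(B,D)$, and finally verify $B\subseteq A_H$ from the definition of $H$.

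For the opening reduction, the crucial move is that \emph{every $\sigma\in\Sigma_H$ admits a representative $[m,x]$ with $m\in N(B,D)$}. Indeed, if $\sigma=[n,x]$ with $n\in N(A,D)$ and $\sigma|_B\ne 0$, choose $a\in B$ with $E(n^*a)(x)\ne 0$ and set $m:=nE(n^*a)$; by Proposition~\ref{prop: fourier coefs exist} we have $m\in N(A,D)$ and $m$ lies in the norm-closed $D$-bimodule generated by $a$, hence $m\in B$ (since $D\subseteq B$), so $m\in N(B,D)$. A direct computation using $E(n^*n)=n^*n\in D$ shows $[n,x](m)=(n^*n)(x)^{1/2}E(n^*a)(x)\ne 0$, which by \cite[Lemma~8.7]{PitStrArx} forces $[n,x]=[m,x]$. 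With this representation in hand, openness is immediate: the basic set $Z(m)$ is an open neighbourhood of $q(\sigma)$ contained in $H$, since for each $y\in s(m)$, $[m,y](m)=(m^*m)(y)^{1/2}\ne 0$. The groupoid axioms then follow: inverses reduce to $[m^*,\beta_m(x)]$ with $m^*\in N(B,D)$, and for composable $[m_1,x_1],[m_2,x_2]$ with $m_i\in N(B,D)$, the product $m_1m_2$ lies in $N(B,D)$ and the identity $(m_1m_2)^*(m_1m_2)(x_2)=(m_1^*m_1)(x_1)(m_2m_2^*)(x_1)$ (valid because $m_1^*m_1\in D$ and $\beta_{m_2}$ implements the partial homeomorphism) is a product of two positive numbers. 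The inclusion $G^{(0)}\subseteq H$ is automatic since any unit functional is already nonzero on $D$.

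For the main identification, let $B':=\overline{\spn}\,N(B,D)$. One checks that $D$ is Cartan in $B'$: maximal abelianness and the approximate-identity condition descend from $A$, the restriction $E|_{B'}$ gives a faithful conditional expectation onto $D$, and regularity holds tautologically. Theorem~\ref{thm: Cartan and open sub} then yields an open subgroupoid $H'\subseteq G$ with $B'=C_r^*(\Sigma_{H'};H')$ and $H'=\{\gamma:q^{-1}(\gamma)|_{B'}\ne 0\}$. The argument of the previous paragraph shows that every $\sigma\in\Sigma_H$ satisfies $\sigma(m)\ne 0$ for some $m\in N(B,D)\subseteq B'$, hence $H\subseteq H'$; the reverse inclusion is trivial from $B'\subseteq B$. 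Thus $\overline{\spn}\,N(B,D)=C_r^*(\Sigma_H;H)\subseteq B$. Finally, $B\subseteq A_H$ is immediate from the definition: if $\sigma\in\Sigma\setminus\Sigma_H$ then $q(\sigma)\notin H$, so $\sigma|_B=0$ (using $\bT$-equivariance to see that the vanishing of $\sigma|_B$ depends only on $q(\sigma)$).

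The main obstacle is the opening step --- replacing the a priori representative $[n,x]$ coming from $N(A,D)$ by one coming from $N(B,D)$. Everything else (openness, the groupoid axioms, the identification with $C_r^*(\Sigma_H;H)$, and the containment $B\subseteq A_H$) is a bookkeeping exercise once this replacement is available, and Proposition~\ref{prop: fourier coefs exist} is precisely the tool that delivers it.
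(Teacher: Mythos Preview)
Your proposal is correct and follows essentially the same approach as the paper: both arguments use Proposition~\ref{prop: fourier coefs exist} to replace an arbitrary representative $[n,x]$ of $\sigma\in\Sigma_H$ by one of the form $[m,x]$ with $m=nE(n^*a)\in N(B,D)$, then invoke Theorem~\ref{thm: Cartan and open sub} on $\overline{\spn}\,N(B,D)$ to obtain the identification with $C_r^*(\Sigma_H;H)$. Your write-up is somewhat more explicit than the paper's (you spell out the groupoid axioms, the equality $H=H'$, and the trivial containment $B\subseteq A_H$, which the paper leaves implicit), but the structure and key steps are the same.
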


\begin{proof}
Take $\sigma \in \Sigma$ and $b\in B$ such that $\sigma(b) \neq 0$.
By definition of $\Sigma$, there is an $n\in N(A,D)$ and $x$ in  $\hat D$ such that $\sigma = [n,x]$.
Thus we have
\begin{align*}
	0\neq [n,x](b) = \frac{E(n^*b)(x)}{(n^*n)(x)^{1/2}}.
\end{align*}	
Thus $E(n^*b)\neq 0$, and hence $nE(n^*b)\neq 0$.
By Proposition~\ref{prop: fourier coefs exist}, $nE(n^*b) \in B \cap N(A,D)$.
Let $m = nE(n^*b)$.
For any $a \in A$
\begin{align*}
	[m,x](a) & = \frac{E(m^*a)(x)}{(m^*m)(x)^{1/2}} 
	= \frac{ E(n^*a)(x)}{ |E(n^*a)(x)|} \sigma(a)= \sigma(a).
\end{align*}
Thus $q([m,x]) = q(\sigma)$.
Thus
\begin{equation}\label{eq: H sub}
 H = \{ q([m,x]) \colon m \in N(B) \}, 
\end{equation}
and hence $H\subseteq G$ is an open subgroupoid.
Let $B_0 = \ol{\spn} \{ n \in N(B,D)\}$.
Then $B_0$ is a C$^*$-algebra satisfying $B_0 \subseteq B$.
By definition, $D$ is regular in $B_0$, and thus $B_0 = C_r^*(\Sigma_H;H)$, by Equation~\eqref{eq: H sub}, Theorem~\ref{thm: Cartan and open sub} and Theorem~\ref{thm: Kumjian Renault}.
\end{proof}

\begin{theorem}\label{thm: main}
Let $A$ be a nuclear C$^*$-algebra, and let $D$ be a C$^*$-diagonal of $A$.
Let $\Sigma \rightarrow G$ be the corresponding twist for the embedding $D \subseteq A$.

Then there is a one-to-one correspondence between C$^*$-algebras $B$ satisfying $D \subseteq B \subseteq A$ and open subgroupoids $H$ satisfying $G^{(0)} \subseteq H \subseteq G$, given by an isomorphism $B \simeq C_r^*(\Sigma_H;H)$.
\end{theorem}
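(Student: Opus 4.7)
The plan is to combine Theorem~\ref{thm: maina} with Theorem~\ref{thm: algebra support}, using the nuclearity hypothesis only once, to pass to amenability of the groupoid. Since $D$ is a C$^*$-diagonal of the nuclear algebra $A\cong C_r^*(\Sigma;G)$, Theorem~\ref{thm: Kumjian Renault} gives that $G$ is principal and \'etale, and Theorem~\ref{thm: nuclear and amenable} gives that $G$ is amenable. With amenability in hand, both halves of the correspondence follow from the machinery already built.

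For the forward direction, let $D\subseteq B\subseteq A$ be any intermediate C$^*$-algebra and set
\[
H_B \;=\; \{q(\sigma) : \sigma\in\Sigma,\ \sigma|_B \ne 0\}.
\]
Theorem~\ref{thm: maina} says that $H_B$ is an open subgroupoid with $G^{(0)}\subseteq H_B\subseteq G$ and that
\[
C_r^*(\Sigma_{H_B};H_B) \;\subseteq\; B \;\subseteq\; A_{H_B}.
\]
Because $G$ is amenable, Theorem~\ref{thm: algebra support} collapses this sandwich: $A_{H_B}=C_r^*(\Sigma_{H_B};H_B)$, so $B=C_r^*(\Sigma_{H_B};H_B)$. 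This simultaneously shows that every intermediate $B$ has the asserted form and that $B\mapsto H_B$ is a left inverse to $H\mapsto \iota(C_r^*(\Sigma_H;H))$.

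For the other direction and injectivity, I would fix an open subgroupoid $G^{(0)}\subseteq H\subseteq G$, let $B=\iota(C_r^*(\Sigma_H;H))$, and verify $H_B=H$. The inclusion $H_B\subseteq H$ is immediate from the containment $C_r^*(\Sigma_H;H)\subseteq A_H$ built into the definition of $A_H$: if $\sigma\in\Sigma\setminus\Sigma_H$ and $b\in B$, then $\sigma(b)=0$. For the reverse, given $\gamma\in H$, the inclusion $D\subseteq C_r^*(\Sigma_H;H)$ is Cartan with twist $\Sigma_H\to H$ by Lemma~\ref{lem: BFPR}, so $\gamma$ lifts to $[m,x]$ with $m\in N(B,D)$; then
\[
[m,x](m)=\frac{E(m^*m)(x)}{(m^*m)(x)^{1/2}}=(m^*m)(x)^{1/2}\neq 0,
\]
witnessing $\gamma\in H_B$.

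The main obstacle has already been met before the statement: it is Theorem~\ref{thm: algebra support}, whose proof is the step that genuinely uses amenability (through positive-type multipliers and the coincidence of reduced and full twisted algebras). Once that identification of $A_H$ with $C_r^*(\Sigma_H;H)$ is available, the proof of Theorem~\ref{thm: main} is essentially bookkeeping on top of Theorem~\ref{thm: maina} and the Kumjian--Renault reconstruction.
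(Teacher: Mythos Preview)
Your proposal is correct and follows exactly the paper's approach: the paper's proof is the single line ``The result follows from Theorem~\ref{thm: algebra support} and Theorem~\ref{thm: maina},'' and you have simply unpacked how those two results combine, together with the passage from nuclearity to amenability via Theorem~\ref{thm: nuclear and amenable}. Your explicit verification that $H\mapsto \iota(C_r^*(\Sigma_H;H))$ and $B\mapsto H_B$ are mutual inverses is more detail than the paper provides, but it is the natural filling-in of the one-line proof.
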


\begin{proof}
The result follows from Theorem~\ref{thm: algebra support} and Theorem~\ref{thm: maina}.
\end{proof}

\begin{corollary}
Let $A$ be a nuclear C$^*$-algebra, and let $D$ be a C$^*$-diagonal of $A$.
If $B$ is a C$^*$-algebra satisfying $D \subseteq B \subseteq A$, then $D$ is a C$^*$-diagonal of $B$.
\end{corollary}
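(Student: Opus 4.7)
The plan is to invoke Theorem~\ref{thm: main} directly. Given a C$^*$-algebra $B$ with $D \subseteq B \subseteq A$, that theorem produces an open subgroupoid $H$ with $G^{(0)} \subseteq H \subseteq G$ and an isomorphism $B \simeq C_r^*(\Sigma_H;H)$ which fixes $D = C_0(G^{(0)}) = C_0(H^{(0)})$. So the task reduces to verifying that $C_0(H^{(0)})$ is a C$^*$-diagonal of $C_r^*(\Sigma_H;H)$, which by the second half of Theorem~\ref{thm: Kumjian Renault} is equivalent to showing that $\Sigma_H \to H$ is a twist with $H$ principal and étale.

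First I would check that $H$ inherits the relevant structural properties from $G$. Since $D$ is a C$^*$-diagonal of $A$, Theorem~\ref{thm: Kumjian Renault} gives that $G$ is principal and étale. Any open subgroupoid of an étale groupoid is étale (the range map, being a local homeomorphism on $G$, restricts to a local homeomorphism on the open set $H$). Likewise, if $\gamma \in H$ satisfies $s(\gamma) = r(\gamma)$, then since $G$ is principal, $\gamma$ must be a unit; thus $\gamma \in G^{(0)} = H^{(0)}$, and $H$ is principal. Finally, $\Sigma_H \to H$ is a twist by Lemma~\ref{lem: BFPR}.

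With these ingredients in place, the second assertion of Theorem~\ref{thm: Kumjian Renault} applies directly to the twist $\Sigma_H \to H$: since $H$ is principal and étale, $C_0(H^{(0)})$ is a C$^*$-diagonal of $C_r^*(\Sigma_H;H)$. Transporting along the isomorphism $B \simeq C_r^*(\Sigma_H;H)$ identifies $C_0(H^{(0)}) = D$ as a C$^*$-diagonal of $B$.

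There is no genuine obstacle here; the corollary is essentially a bookkeeping consequence of Theorem~\ref{thm: main} together with the permanence of the principal-étale property under passage to open subgroupoids. The only subtlety worth double-checking is that the isomorphism furnished by Theorem~\ref{thm: main} really does intertwine the canonical diagonals (so that the copy of $D$ inside $B$ corresponds to $C_0(H^{(0)})$ and not merely to some isomorphic subalgebra), but this is built into the inclusion $\iota$ of Lemma~\ref{lem: BFPR}, since $H^{(0)} = G^{(0)}$ and $\iota$ is the identity on $C_0(G^{(0)})$.
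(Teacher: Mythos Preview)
Your proposal is correct and matches the paper's approach: the paper gives no proof for this corollary, treating it as an immediate consequence of Theorem~\ref{thm: main}, and your argument spells out exactly those details (open subgroupoids of principal \'etale groupoids are principal and \'etale, Lemma~\ref{lem: BFPR} gives the subtwist, and Theorem~\ref{thm: Kumjian Renault} finishes it).
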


%%%%%%%%%%%%%%%%%%%%%%
\section{Crossed Products}
Let $\Gamma$ be a discrete group acting on a compact Hausdorff space $X$ by homeomorphisms.
The set $\Gamma \times X$ becomes the \emph{transformation groupoid} under the partial multiplication
$$ (g_1,g_2 \cdot x)(g_2,x) = ((g_1g_2), x)$$
and inversion
$$ (g,x)^{-1} = (g^{-1},g\cdot x). $$
With the product topology the transformation groupoid $\Gamma \times X$ is \'etale, and clearly Hausdorff.
Let $C(X) \cpr_r \Gamma$ be the reduced crossed product.
One can show that $C(X) \cpr_r \Gamma = C_r^*(\Gamma \times X)$.

We view $C(X)$ as a subalgebra of $C(X) \cpr_r \Gamma$ and denote the canonical unitary representation of $\Gamma$ in $C(X) \cpr_r \Gamma$ by $\{u_g\}_{g\in \Gamma}$.
Denote by $E$ the usual faithful conditional expectation from $C(X) \cpr_r \Gamma$ to $C(X)$; see \cite[Proposition~4.1.9]{BrownOzawa}.
Recall that the action of $\Gamma$ on $X$ is \emph{free} if for each $g\neq e$ in $\Gamma$ the set $\{g\cdot x = x \colon x\in X\}$ is empty.
The action is \emph{topologically free} if for each $g\neq e$ in $\Gamma$ the set $\{g \cdot x = x \colon x\in X\}$ has empty interior.
Zeller-Meier \cite{ZM1968} showed that $C(X)$ is maximal abelian in $C(X) \cpr_r \Gamma$ if and only if the action of $\Gamma$ on $X$ is topologically free.
Thus, $C(X)$ is Cartan in $C(X) \cpr_r \Gamma$ if and only if the action of $\Gamma$ on $X$ is topologically free.
Further $C(X)$ is a C$^*$-diagonal in $C(X) \cpr_r \Gamma$ if and only if the action of $\Gamma$ on $X$ is free.
This can be seen by the fact that the transformation groupoid $\Gamma \times X$ is principal if and only if the action of $\Gamma$ is free.
One can alternatively show that freeness of the action of $\Gamma$ implies that
$$ E(a) \in \ol{\conv}\{ uau^* \colon u \in C(X),\ u\text{ unitary}\}, $$
see \cite[Proposition~11.1.19]{GKPTbook}.
Thus $C(X)$ has the unique extension property in $C(X) \cpr_r \Gamma$ by Theorem~\ref{thm: ABG} if and only if the action of $\Gamma$ on $X$ is free.
It follows that if $\Gamma$ acts by a topologically free action on $X$ then $C(X)$ is Cartan in $C(X) \rtimes \Gamma$ with associated twist being simply the trivial twist on the transformation groupoid $\Gamma \times X$.

Consider a reduced crossed product $C(X) \cpr_r \Gamma$, where
$\Gamma$ is a discrete group acting on a compact Hausdorff
space $X$.  Recall that every element $a \in C(X) \cpr_r \Gamma$ is
uniquely determined by its Fourier series.  We write
$$ a \sim \sum_{g\in \Gamma} a_g u_g $$
where $a_g = E(au_g^*) \in C(X)$.  To be wholly consistent with the
general theory of C$^*$-diagonals discussed above one would consider
the elements $u_gE(u_g^*a)$, as in Proposition~\ref{prop: fourier
  coefs exist}, but we will stick the usual convention of setting
$a_g = E(au_g^*)$ when working with crossed products.
Note that, if the action of $\Gamma$ is free then, as
in Proposition~\ref{prop: fourier coefs exist}, $a_gu_g$ is in the
norm-closed $C(X)$-bimodule generated by $a$.

The elements of the transformation groupoid $\Gamma \times X$ act as linear functionals on $C(X) \cpr_r \Gamma$. 
As our Fourier coefficients are of the form $E(au_g^*)u_g$ and \emph{not} $u_gE(u_g^* a)$, we define the linear functionals slightly differently than in the general Cartan/C$^*$-diagonal case, with the normalizers acting on the other side.
Thus, for $a \in C(X) \cpr_r \Gamma$ and $(g,x) \in \Gamma \times X$ we have
$$ (g,x) (a) = E(au_g^*)(x) = a_g(x), $$
when $a$ has Fourier series $a \sim \sum_{g\in\Gamma} a_g u_g$.

The following example shows that Theorem~\ref{thm: main} need not hold for Cartan embeddings even when $C(X) \cpr_r \Gamma$ is nuclear.

\begin{example}\label{ex: top free act}
Let $\ol{\bD}$ be the closed unit disk in $\bC$.
Let $\bZ$ act on $\ol{\bD}$ by an irrational rotation.
That is, we fix $\alpha\in \bT$ such that $\{\alpha^n: n\in\bN\}$ is
infinite and for $n\in \bZ$ and $z \in \ol\bD$, we define
\[ n \cdot z = \alpha^n z.\]
Then the action of $\bZ$ on $\ol\bD$ is topologically free.
It is not a free action since $0$ is fixed.
Hence $C(\ol\bD)$ is Cartan in $C(\ol\bD)\cpr \bZ$, but it is not a C$^*$-diagonal.
Let $u \in C(\ol\bD) \cpr \bZ$ be the unitary generating the
representation of $\bZ$.

Consider the ideal
$$ J = \{ f\in C(\ol\bD) \colon f(0) = 0 \}$$
in $C(\ol\bD)$.
Since $\{0\}$ is fixed under our action, the set
$$ K = \ol{\spn}\{ fu^n \colon f\in J \} $$
is an ideal in $C(\ol\bD) \cpr \bZ$, \cite[Proposition~VIII.3.3]{DavCstarBook}.
Let $q$ denote the quotient map
\[ q \colon C(\ol\bD) \cpr \bZ \rightarrow (C(\ol\bD)\cpr\bZ)/ K
\simeq C(\bT); \]  indeed,  for $N\in\bN$, \[ q\left(\sum_{n=-N}^N
f_nu^n\right)=\sum_{n=-N}^N f_n(0)z^n\in C(\bT).\] 
Let
$$ Y = \{ f \in C(\bT) \colon f(1) = f(-1)\}$$
and let $B = q^{-1}(Y)$. 
Then $B$ is a C$^*$-algebra satisfying
$$ C(\ol\bD) \subseteq B \subseteq C(\ol\bD) \cpr \bZ. $$

For any
$x\in \bD$ and $r\in \bZ$, a calculation shows that for $f\in
C(\overline \bD)$ and $n\in \bZ$,
\[(r,x)(fu^n)=\begin{cases} 0& \text{if $r\neq n$};\\  f(r\cdot x) &
    \text{when $r=n$.}
  \end{cases}\]

Suppose $r\in \bZ$ is odd.  Let $b= u^r-u^{-r}\in B$.
Then for any $x\in \ol{D}$, $(r,x)(b)\neq 0$.
On the other hand, when $r$ is even, $u^r\in B$, so for  $x\in \ol{\bD}$,
$(x,r)(u^r)\neq 0$.   
Hence, $\{\sigma\in \bZ \times \ol{\bD}: \sigma|_B\neq 0\}=\bZ \times \ol{\bD}$.
Theorem~\ref{thm: Cartan and open sub} thus implies that
$C(\ol{\bD})$ is not Cartan in $B$.
\end{example}

Example~\ref{ex: top free act} shows that in general there is not a nice correspondence between subgroupoids and intermediate C$^*$-algebras of a Cartan embedding.
It may be that Theorem~\ref{thm: clopen} is the best we can hope for in that generality.
The following corollary to Theorem~\ref{thm: clopen} recovers a special case of Choda's results \cite{Cho7980}.

\begin{corollary}
Let $\Gamma$ be a discrete group acting topologically freely on  a connected Hausdorff space $X$.
Then the map $H \mapsto C(X) \cpr_r H$ gives a one-to-one correspondence between subgroups $H \subseteq \Gamma$ and intermediate C$^*$-algebras $C(X) \subseteq B \subseteq C(X) \cpr_r \Gamma$ for which there is a faithful conditional expectation $C(X) \cpr_r \Gamma \rightarrow B$.
\end{corollary}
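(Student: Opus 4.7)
The plan is to reduce the corollary directly to Theorem~\ref{thm: clopen} and use the connectedness of $X$ to identify the relevant class of subgroupoids with subgroups of $\Gamma$.

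First, I would observe that since $\Gamma$ acts topologically freely on $X$, it is noted in the earlier discussion that $C(X)$ is a Cartan subalgebra of $C(X) \cpr_r \Gamma$ and the associated twist is the trivial twist over the transformation groupoid $\Gamma \times X$, so that $C(X) \cpr_r \Gamma \cong C_r^*(\Gamma \times X)$ and $C(X) \cong C_0(G^{(0)})$ where $G^{(0)} = \{e\} \times X$. Hence, by Theorem~\ref{thm: clopen}(ii), the assignment $H \mapsto C_r^*(\Sigma_H;H)$ is a bijection between clopen subgroupoids $H$ of $\Gamma \times X$ satisfying $\{e\} \times X \subseteq H$ and intermediate C$^*$-algebras $C(X) \subseteq B \subseteq C(X) \cpr_r \Gamma$ admitting a (necessarily faithful, as in the proof of Theorem~\ref{thm: clopen}) conditional expectation.

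Next, I would classify the clopen subgroupoids of $\Gamma \times X$ containing $\{e\} \times X$. Since $\Gamma$ is discrete, the product topology is such that a subset $U \subseteq \Gamma \times X$ is open (resp.\ closed) if and only if each slice $U_g := \{x \in X : (g,x) \in U\}$ is open (resp.\ closed) in $X$. Therefore $U$ is clopen in $\Gamma \times X$ if and only if every slice $U_g$ is clopen in $X$. Connectedness of $X$ forces each $U_g$ to be either $\varnothing$ or $X$, so every clopen subset of $\Gamma \times X$ has the form $S \times X$ for a unique $S \subseteq \Gamma$. Setting $S_H := \{g \in \Gamma : (g,x) \in H \text{ for some, equivalently all, } x \in X\}$ for such an $H = S_H \times X$, the groupoid operations $(g_1, g_2\cdot x)(g_2,x) = (g_1 g_2, x)$ and $(g,x)^{-1} = (g^{-1}, g\cdot x)$ translate the subgroupoid condition into $S_H$ being closed under multiplication and inversion, and the requirement $\{e\} \times X \subseteq H$ gives $e \in S_H$. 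Thus the clopen subgroupoids of $\Gamma \times X$ containing $\{e\} \times X$ are exactly the sets $H \times X$ for $H$ a subgroup of $\Gamma$.

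Finally, for a subgroup $H \subseteq \Gamma$ the subgroupoid $H \times X \subseteq \Gamma \times X$ is (trivially twisted and) canonically the transformation groupoid of the restricted action of $H$ on $X$, so $C_r^*(\Sigma_{H \times X}; H \times X) = C_r^*(H \times X) = C(X) \cpr_r H$. Composing this identification with the bijection of Theorem~\ref{thm: clopen}(ii) yields the claimed one-to-one correspondence $H \mapsto C(X) \cpr_r H$.

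There is no real obstacle here: the content is essentially the topological observation that a clopen subset of $\Gamma \times X$ is a union of full copies of $X$ when $X$ is connected, which makes the subgroupoid structure collapse to that of a subgroup of $\Gamma$. The only thing to check carefully is that, under this identification, $C_r^*(H \times X)$ really agrees with the reduced crossed product $C(X) \cpr_r H$ sitting inside $C(X) \cpr_r \Gamma$ via the inclusion of Lemma~\ref{lem: BFPR}, which is immediate from the definitions.
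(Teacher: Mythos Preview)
Your proposal is correct and follows essentially the same approach as the paper: apply Theorem~\ref{thm: clopen} and use connectedness of $X$ to show that the clopen subgroupoids of $\Gamma \times X$ containing the unit space are precisely the sets $H \times X$ for subgroups $H \subseteq \Gamma$. The paper's proof states this in two sentences; you have simply fleshed out the details of the clopen-slice argument and the identification $C_r^*(H \times X) \cong C(X)\cpr_r H$.
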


\begin{proof}
Since $X$ is connected the only subgroupoids of $\Gamma \times X$ which are both closed and open are of the form $H \times \Gamma$ where $H \subseteq \Gamma$ is a subgroup.
Thus, the result follows from Theorem~\ref{thm: clopen}.
\end{proof}

We now turn our attention to free actions.
To apply Theorem~\ref{thm: main} directly we also need to know when $C(X) \cpr_r \Gamma$ is nuclear.
This happens precisely when the action of $\Gamma$ on $X$ is amenable, which in turn happens precisely when the transformation groupoid $\Gamma \times X$ is amenable.
Thus, for $C(X)$ to be a C$^*$-diagonal in a nuclear $C(X)\cpr_r \Gamma$ we need the action of $\Gamma$ on $X$ to be free and amenable.
However, if the action of $\Gamma$ on $X$ is both free and amenable, then $\Gamma$ itself must be amenable \cite[Corollary~4.3]{Moo2009}.
Hence Theorem~\ref{thm: main}, when applied to crossed products, applies only to free actions of amenable groups.
We can loosen this condition.

If $a \in C(X) \rtimes \Gamma$ with Fourier series $a \sim  \sum_{g\in \Gamma} a_g u_g$.
One cannot expect the series
$\sum_{g\in \Gamma} a_g u_g$ to converge in norm (though it does 
converge in the Bures-topology in the enveloping von Neumann algebra
\cite{Mer1985}).  With the right class of groups $\Gamma$ we can,
however, recover any element $a\in C(X) \cpr_r \Gamma$ from its
Fourier series.

\begin{definition}
A discrete group $\Gamma$ has the \emph{approximation property} if $1$
is in the weak$^*$-closure of the space of finitely supported
functions on $\Gamma$.\footnote{To be explicit, $\Gamma$ has the
  approximation property if there exists a net $(f_\alpha)$ in
  $c_c(\Gamma)$ such that for every $\phi\in \ell^1(\Gamma)$,
  $\sum_{\gamma\in \Gamma} \phi(\gamma) f_\alpha(\gamma)\rightarrow
  \sum_{\gamma\in \Gamma} \phi(\gamma)$.}
\end{definition}

The approximation property was introduced by Cowling and Haagerup in \cite{CowHaa1989}, where they showed that both amenable and weakly amenable groups satisfy the approximation property.
The following result was proved by B\'edos and Conti \cite[Section~5]{BedCon2015}, generalizing earlier work of Exel \cite{Exe1997}.

\begin{theorem}\label{thm: ap fourier}
Let $\Gamma$ be a discrete group satisfying the approximation property and let $C(X) \cpr_r \Gamma$ be a reduced crossed product.
If $a\in C(X) \cpr_r \Gamma$ has Fourier series $a \sim \sum_{g\in \Gamma} a_g u_g$, then
$$ a \in \overline{\spn}\{a_gu_g \colon g\in \Gamma\}.$$
\end{theorem}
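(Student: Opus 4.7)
The plan is to approximate $a$ by finite linear combinations of its Fourier terms $a_g u_g$, using Fourier-type multipliers on $C(X) \cpr_r \Gamma$ associated to finitely supported functions on $\Gamma$.

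First, for each $f \in c_c(\Gamma)$, I would construct a bounded linear map $M_f \colon C(X) \cpr_r \Gamma \to C(X) \cpr_r \Gamma$ determined on the dense $\ast$-subalgebra $\spn\{b u_g : b\in C(X),\, g \in \Gamma\}$ by
$$ M_f(b u_g) = f(g)\, b u_g. $$
Existence of such a bounded (indeed completely bounded) extension is standard: since $f$ is finitely supported it lies in the Fourier algebra $A(\Gamma)$, hence is a Herz--Schur multiplier of $C_r^*(\Gamma)$, and a standard tensor/slice-map argument transfers the multiplier to the reduced crossed product. Using continuity of the canonical conditional expectation $E \colon C(X) \cpr_r \Gamma \to C(X)$, one verifies on the dense subalgebra that $E(M_f(a) u_g^*) = f(g)\, E(a u_g^*) = f(g) a_g$, and so, since $f$ has finite support,
$$ M_f(a) = \sum_{g \in \supp f} f(g)\, a_g u_g \in \spn\{a_g u_g : g \in \Gamma\}. $$

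Second, I would use the approximation property to produce a net $(f_\alpha) \subseteq c_c(\Gamma)$ for which $M_{f_\alpha}(a) \to a$ in norm for every $a \in C(X) \cpr_r \Gamma$. The defining condition of the AP (as stated in the footnote) places $1$ in an appropriate weak closure of $c_c(\Gamma)$; by the Haagerup--Kraus theory this upgrades to the existence of such $f_\alpha$ with $\sup_\alpha \|M_{f_\alpha}\|_{\mathrm{cb}} < \infty$ and $M_{f_\alpha}(x) \to x$ in norm for every $x \in C_r^*(\Gamma)$, and this further extends (by the same slice-map mechanism used above) to the reduced crossed product $C(X) \cpr_r \Gamma$. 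Combining the two steps, $M_{f_\alpha}(a) \in \spn\{a_g u_g : g \in \Gamma\}$ and $M_{f_\alpha}(a) \to a$, which proves $a \in \ol{\spn}\{a_g u_g : g \in \Gamma\}$.

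The main obstacle is the second step: translating the abstract weak-type defining property of the AP into genuine point-norm convergence of the Fourier multipliers $M_{f_\alpha}$ on $C(X) \cpr_r \Gamma$. This requires the deep correspondence between the AP and approximation of the identity by finite-rank completely bounded Fourier multipliers, together with a uniform-boundedness argument to pass from norm convergence on the dense $\ast$-subalgebra $\spn\{bu_g\}$ to norm convergence on the whole crossed product. This technical bridge is exactly the content of the cited work of B\'edos--Conti \cite{BedCon2015}, building on Exel \cite{Exe1997}; the rest of the argument, as outlined above, is essentially a bookkeeping check that the Fourier coefficients transform as expected under $M_f$.
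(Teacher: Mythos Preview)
The paper does not give its own proof of this theorem: it is simply quoted as a result of B\'edos and Conti \cite[Section~5]{BedCon2015}, building on Exel \cite{Exe1997}. Your outline is faithful to that approach---build completely bounded Fourier multipliers $M_f$ on the crossed product for $f\in c_c(\Gamma)$, check that $M_f(a)$ lands in $\spn\{a_gu_g\}$, and then use the approximation property to produce a net $(f_\alpha)$ with $M_{f_\alpha}(a)\to a$ in norm---and you correctly identify the second step as the substantive one handled by the cited references.

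There is one genuine inaccuracy in your sketch, however. You write that Haagerup--Kraus upgrades the AP to a net with $\sup_\alpha\|M_{f_\alpha}\|_{\mathrm{cb}}<\infty$, and then invoke a ``uniform-boundedness argument'' to pass from convergence on the dense subalgebra to convergence on all of $C(X)\cpr_r\Gamma$. That uniform bound is exactly weak amenability, which is strictly stronger than the AP; the AP alone does \emph{not} provide it. What Haagerup--Kraus actually give is that the AP is equivalent to the existence of a net $(f_\alpha)\subseteq A(\Gamma)$ such that $M_{f_\alpha}\otimes\id_B\to\id$ in the point-norm topology on $C_r^*(\Gamma)\otimes_{\min} B$ for \emph{every} $C^*$-algebra $B$ (the ``stable point-norm'' or slice-map characterization). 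It is this stability---not a uniform cb-bound---that lets one transfer point-norm convergence from $C_r^*(\Gamma)$ to the crossed product, since $C(X)\cpr_r\Gamma$ sits inside $C_r^*(\Gamma)\otimes_{\min} B$ for a suitable $B$. Once you replace the uniform-boundedness step with this, your plan matches the B\'edos--Conti argument the paper cites.
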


\begin{remark}
Theorem~\ref{thm: ap fourier} applies to twisted crossed products.
Thus, the results which follow will also apply in that generality.
\end{remark}

\begin{corollary}\label{cor: ap fourier}
Let $\Gamma$ be a discrete group satisfying the approximation property which acts freely on a compact Hausdorff space $X$.
If $M \subseteq C(X) \cpr_r \Gamma$ is a norm-closed $C(X)$-bimodule then
$$ M = \ol{\spn}\{ n \in N(M,C(X)) \} = \ol{\spn}\{ fu_g \colon f\in C(X),\ fu_g \in M\}. $$
\end{corollary}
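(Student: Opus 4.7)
The plan is to combine two facts that have already been set up in the paper: (i) when the action of $\Gamma$ on $X$ is free, every Fourier coefficient $a_g u_g = E(au_g^*) u_g$ of an element $a \in C(X) \cpr_r \Gamma$ lies in the norm-closed $C(X)$-bimodule generated by $a$, which is the crossed-product version of Proposition~\ref{prop: fourier coefs exist} stated in the paragraph preceding this corollary; and (ii) Theorem~\ref{thm: ap fourier}, which, thanks to the approximation property, allows us to recover $a$ in norm from its Fourier series.

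First I would fix $a \in M$. By (i), and because $M$ is a norm-closed $C(X)$-bimodule containing $a$, every Fourier coefficient $a_g u_g$ lies in $M$. Then Theorem~\ref{thm: ap fourier} gives
\[ a \in \overline{\spn}\{a_g u_g : g \in \Gamma\} \subseteq \overline{\spn}\{ f u_g : f \in C(X),\ fu_g \in M\}. \]
This establishes the containment $M \subseteq \overline{\spn}\{fu_g : f \in C(X),\ fu_g \in M\}$; the reverse containment is immediate because each such $fu_g$ lies in $M$.

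To obtain the description in terms of normalizers, I would note that every element of the form $fu_g$ with $f \in C(X)$ is a normalizer of $C(X)$ in $C(X) \cpr_r \Gamma$: using $u_g d u_g^* = \alpha_g(d)$ and $u_g^* d u_g = \alpha_{g^{-1}}(d)$ for $d \in C(X)$, both $(fu_g) C(X) (fu_g)^*$ and $(fu_g)^* C(X) (fu_g)$ lie in $C(X)$. Hence
\[ \{fu_g : f\in C(X),\ fu_g \in M\} \subseteq N(M, C(X)) \subseteq M, \]
and taking closed spans gives equality of all three sets.

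The only point where anything nontrivial is happening is the invocation of (i); this is really where freeness enters, since Theorem~\ref{thm: ABG} (via the C$^*$-diagonal property that a free action provides) is what yields $a_g u_g \in \overline{C(X) a C(X)}$ by averaging the Cesaro-type convex combinations $\sum_i \lambda_i u_i (au_g^*) u_i^*$ against $u_g$ on the right. Since this statement has been recorded by the authors just before the corollary, the argument itself reduces to two citations and a one-line check that $fu_g$ normalizes $C(X)$.
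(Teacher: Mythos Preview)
Your proof is correct and follows essentially the same approach as the paper's: invoke Proposition~\ref{prop: fourier coefs exist} (via freeness) to get $a_g u_g \in M$, then apply Theorem~\ref{thm: ap fourier} to recover $a$. Your explicit verification that each $fu_g$ normalizes $C(X)$, and hence that the normalizer description agrees with the Fourier-coefficient description, is a detail the paper leaves implicit; otherwise the arguments coincide.
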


\begin{proof}
Since $\Gamma$ acts freely on $X$, if $a \in M$ with $a\sim \sum_{g\in \Gamma} a_g u_g$, then $a_g u_g \in M$, by Proposition~\ref{prop: fourier coefs exist}.
The result follows now from Theorem~\ref{thm: ap fourier}.
\end{proof}

In \cite{CamSmi2019}, Cameron and Smith prove a spectral theorem for $A$-bimodules in $A \cpr_r \Gamma$, where $\Gamma$ is a discrete group satisfying the approximation property, and $A$ is a simple C$^*$-algebra.
There, the bimodules are determined by subsets of $\Gamma$.
When $\Gamma$ acts freely on a compact Hausdorff space $X$, we show that the $C(X)$-bimodules are determined by open subsets of the transformation groupoid $\Gamma \times X$.

For $g\in \Gamma$ we denote by $\pi_g$ the projection map on subsets $U \subseteq \Gamma \times X$ defined by
$$ \pi_g(U) = \{ x\in X \colon (g,x) \in U \}. $$

\begin{theorem}[Spectral Theorem for Bimodules]\label{thm: spectral}
Let $\Gamma$ be a discrete group satisfying the approximation property, which acts freely on a compact Hausdorff space $X$.
The map
$$ U \mapsto \ol{\spn}\{ f u_g \colon f\in C(X),\ \supp(f) \subseteq \pi_g(U) \}, $$
defines a one-to-one correspondence between open subsets $U \subseteq \Gamma \times X$ and closed $C(X)$-bimodules in $C(X) \cpr_r \Gamma$.
\end{theorem}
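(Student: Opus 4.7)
The plan is to construct an explicit inverse $M \mapsto U(M)$ and verify both compositions are the identity. For a closed $C(X)$-bimodule $M \subseteq C(X) \cpr_r \Gamma$, set
\[
U(M) = \bigsqcup_{g \in \Gamma} \{g\} \times V_g(M), \qquad V_g(M) = \bigcup_{a \in M} \{x \in X : a_g(x) \neq 0\}.
\]
Each $V_g(M)$ is a union of open sets, hence open, so $U(M)$ is open in the product topology on $\Gamma \times X$. That $M(U)$ is a closed $C(X)$-bimodule is immediate from the identities $h \cdot (fu_g) = (hf)u_g$ and $(fu_g) \cdot h = f(g\cdot h)u_g$, both of which preserve the support condition $\supp(f) \subseteq \pi_g(U)$.

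The identity $U(M(U)) = U$ is the easier direction. If $(g,x) \in U$, then since $\pi_g(U)$ is open we may choose $f \in C(X)$ with $f(x) \neq 0$ and $\supp(f) \subseteq \pi_g(U)$, so that $fu_g \in M(U)$ witnesses $(g,x) \in U(M(U))$. Conversely, $Z_g := \{\phi \in C(X) : \phi|_{X \setminus \pi_g(U)} = 0\}$ is norm-closed in $C(X)$ and contains the $g$-Fourier coefficient of every generator of $M(U)$; since $a \mapsto a_g = E(au_g^*)$ is contractive, every $a \in M(U)$ has $a_g \in Z_g$, forcing $(g,x) \in U(M(U))$ to satisfy $x \in \pi_g(U)$.

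The identity $M(U(M)) = M$ is the heart of the argument. Define $I_g(M) = \{a_g : a \in M\}$. The bimodule identities $(ha)_g = h a_g$ and $(ah)_g = a_g (g \cdot h)$, combined with the fact that $h \mapsto g \cdot h$ is an automorphism of $C(X)$, show that $I_g(M)$ is stable under left and right multiplication by $C(X)$. Hence its norm-closure $J_g(M)$ is a closed ideal of $C(X)$, and so $J_g(M) = \{\phi : \phi|_{F_g} = 0\}$ for the closed set $F_g = X \setminus V_g(M)$. Because $\|fu_g\|_{C(X) \cpr_r \Gamma} = \|f\|_\infty$ and because $a_g u_g$ lies in the norm-closed $C(X)$-bimodule generated by $a$ whenever the action is free (Proposition~\ref{prop: fourier coefs exist}), we get $J_g(M) u_g \subseteq M$. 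A standard Urysohn/cutoff argument in the compact Hausdorff space $X$ shows $\{f : \supp(f) \subseteq V_g(M)\}$ is norm-dense in $J_g(M)$, so
\[
M(U(M)) \;=\; \ol{\spn} \bigcup_{g \in \Gamma} J_g(M) u_g \;\subseteq\; M.
\]
For the reverse inclusion, Corollary~\ref{cor: ap fourier} gives $a \in \ol{\spn}\{a_g u_g : g \in \Gamma\}$ for every $a \in M$, and each $a_g u_g$ lies in $M(U(M))$ by construction.

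The main obstacle is the last step: recovering $a \in M$ from its Fourier coefficients. This is where both standing hypotheses are essential. Without freeness, Proposition~\ref{prop: fourier coefs exist} fails and the individual Fourier terms $a_g u_g$ need not lie in $M$; without the approximation property, Corollary~\ref{cor: ap fourier} fails and there is no reason $a$ should be recoverable from its $a_g u_g$'s at all. Indeed, Example~\ref{ex: top free act} illustrates what goes wrong when freeness is relaxed to topological freeness. The remaining technical point, the Urysohn density of functions with support in $V_g(M)$ inside the ideal $J_g(M)$, is routine but does require the compact (or locally compact) Hausdorff structure on $X$.
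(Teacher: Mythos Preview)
Your proof is correct and follows essentially the same strategy as the paper's: build an explicit inverse by associating to each bimodule the open set determined by the ideals of Fourier coefficients, then invoke Corollary~\ref{cor: ap fourier} to close the loop. The paper's argument is terser---it defines $U_g$ via the normalizers $fu_g$ already lying in $N$ rather than via Fourier coefficients of arbitrary elements, notes $J_g=\{f:fu_g\in N\}$ is an ideal equal to $C_0(U_g)$, and then simply asserts that the two maps are mutual inverses ``by Corollary~\ref{cor: ap fourier}''---whereas you carry out both compositions explicitly and in particular verify the Urysohn density step and the containment $J_g(M)u_g\subseteq M$ that the paper leaves implicit.
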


\begin{proof}
Let $U \subseteq \Gamma \times X$ be open.
Let
$$ N_U = \ol{\spn}\{f u_g \colon f \in C(X),\ \supp(f) \subseteq \pi_g(U) \} \subseteq C(X) \cpr_r \Gamma. $$
Then $N_U$ is a norm-closed $C(X)$-bimodule.  To show that the map
$\Phi \colon U \mapsto N_U$ is a surjective map from open subsets of
$\Gamma \times X$ and closed $C(X)$-bimodules in $C(X) \cpr_r \Gamma$
we construct the inverse map.

Let $N \subseteq C(X) \cpr_r \Gamma$ be a norm-closed $C(X)$-bimodule.
For each $g \in \Gamma$ define the set
$$ U_g = \bigcup\{ \supp(f) \colon f\in C(X),\ fu_g \in N\}. $$
Note that since $N$ is a $C(X)$-bimodule the set
$$ J_g = \{ f \in C(X) \colon fu_g \in N \} $$
is an ideal in $C(X)$.
Indeed $J_g = C_0(U_g)$.
Let
$$ U_N = \bigcup_{g\in \Gamma} \{g\} \times U_g. $$
Since each $U_g$ is open, $U_N$ is an open subset of $\Gamma \times X$.
Denote by $\Psi$ the map $\Psi \colon N \mapsto U_N$.

That $\Psi$ is the inverse of $\Phi$ follows from Corollary~\ref{cor: ap fourier}.
\end{proof}

Since, if $B$ is a C$^*$-algebra satisfying $C(X) \subseteq B \subseteq C(X) \cpr_r \Gamma$, then $B$ is a $C(X)$-bimodule, we get the following Corollary, which extends Theorem~\ref{thm: main}.

\begin{corollary}\label{cor: spectral}
Let $\Gamma$ be a discrete group satisfying the approximation property which acts freely on a compact Hausdorff space $X$.
Then the map
$$H \mapsto \ol{\spn}\{ f u_g \colon f\in C(X),\ \supp(f) \subseteq \pi_g(U) \}$$
defines a one-to-one correspondence between open subgroupoids $H \subseteq \Gamma \times X$ with $\{e\} \times X \subseteq H$ and intermediate C$^*$-algebras $B$ with $C(X) \subseteq B \subseteq C(X) \cpr_r \Gamma$.
\end{corollary}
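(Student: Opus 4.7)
The plan is to deduce this corollary from Theorem~\ref{thm: spectral}. By that theorem, the map $U \mapsto N_U$ is a bijection between open subsets of $\Gamma \times X$ and norm-closed $C(X)$-bimodules in $C(X) \cpr_r \Gamma$. Every intermediate C$^*$-algebra $B$ with $C(X) \subseteq B \subseteq C(X) \cpr_r \Gamma$ is in particular a norm-closed $C(X)$-bimodule, so $B = N_U$ for a unique open $U \subseteq \Gamma \times X$. It thus remains to identify for which open $U$ the bimodule $N_U$ is a unital $*$-subalgebra containing $C(X)$.

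First I would handle the unit-space condition. The inclusion $C(X) \subseteq N_U$ is equivalent to having $fu_e \in N_U$ for every $f \in C(X)$, and the defining condition $\supp(f) \subseteq \pi_e(U)$, imposed for all such $f$, forces $\pi_e(U) = X$; equivalently $\{e\} \times X \subseteq U$.

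Next I would translate multiplicative and $*$-closure of $N_U$ into conditions on $U$. Computing on the generators gives
$$ (fu_g)(f' u_h) = f\cdot \alpha_g(f')\cdot u_{gh} \qquad\text{and}\qquad (fu_g)^* = \alpha_{g^{-1}}(\overline{f})\cdot u_{g^{-1}}, $$
with $\alpha_g(f)(x) = f(g^{-1}\cdot x)$. A direct support calculation (for instance, $\supp(f\cdot \alpha_g(f')) = \supp(f) \cap g\cdot \supp(f')$), combined with the transformation-groupoid composition $(g_1, g_2\cdot x)(g_2, x) = (g_1 g_2, x)$ and inversion $(g,x)^{-1} = (g^{-1}, g\cdot x)$ fixed at the start of the section, translates multiplicative and $*$-closure of $N_U$ into the statement that $U$ is closed under groupoid composition and inversion.

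Combining these, the bijection of Theorem~\ref{thm: spectral} restricts to the desired bijection between open subgroupoids $H$ of $\Gamma \times X$ containing $\{e\} \times X$ and intermediate C$^*$-algebras $C(X) \subseteq B \subseteq C(X) \cpr_r \Gamma$, realized by the formula in the statement. The only genuinely delicate step is the support bookkeeping under the $\Gamma$-action, where one must be careful to match the conventions of the crossed product with those of the transformation groupoid; once this is done, the corollary follows as a clean restriction of Theorem~\ref{thm: spectral}.
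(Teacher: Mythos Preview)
Your proposal is correct and follows exactly the route the paper intends: the paper presents Corollary~\ref{cor: spectral} without a written proof, merely observing beforehand that any intermediate $C^*$-algebra $C(X)\subseteq B\subseteq C(X)\cpr_r\Gamma$ is a $C(X)$-bimodule, so the result is a restriction of Theorem~\ref{thm: spectral}. You have supplied the details the paper leaves implicit---namely, that under the bijection $U\leftrightarrow N_U$, the conditions ``$N_U$ contains $C(X)$'' and ``$N_U$ is $*$- and multiplicatively closed'' correspond precisely to ``$\{e\}\times X\subseteq U$'' and ``$U$ is closed under the groupoid operations''---and your caution about matching the crossed-product and transformation-groupoid conventions is well placed, since the support of $fu_g$ viewed as a function on $\Gamma\times X$ involves a shift by $g$.
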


%%%%%%%%%%%%%%%%%%%%%%%%%%%%%%%%%%%%
\bibliography{intermediate}{}
\bibliographystyle{amsplain}

\end{document}